\theoremstyle{plain}
\newtheorem{thm}{Theorem}[section]
\newtheorem{cor}{Corollary}[section]
\newtheorem{lem}{Lemma}[section]
\theoremstyle{definition}
\newtheorem{defn}[thm]{Definition}
\theoremstyle{plain}
\newtheorem{conj}{Conjecture}[section]
\author{Xiao-Lu Gao
  \and Shou-Jun Xu\thanks{Corresponding author.}}
\title[The LexCycle on $\overline{P_{2}\cup P_{3}}$-free Cocomparability  Graphs]{The LexCycle on $\overline{P_{2}\cup P_{3}}$-free Cocomparability  Graphs\footnote{This work is supported by NSFC (Grant Nos. 12071194, 11571155).}}
\affiliation{School of Mathematics and Statistics, Lanzhou University, Gansu, China}
\keywords{cocomparability graph, LBFS$^{+}$, LexCycle, $\overline{P_{2}\cup P_{3}}$-free, diamond-free, girth 4}
\begin{document}
\publicationdetails{22}{2020}{4}{13}{5358}
\maketitle
\begin{abstract}
  A graph $G$ is a cocomparability graph if there exists an acyclic transitive orientation of the edges of its complement graph $\overline{G}$. LBFS$^{+}$ is a variant of the generic Lexicographic Breadth First Search (LBFS), which uses a specific tie-breaking mechanism. Starting with some ordering $\sigma_{0}$ of $G$, let $\{\sigma_{i}\}_{i\geq 1}$ be the sequence of orderings such that $\sigma_{i}=$LBFS$^{+}(G, \sigma_{i-1})$. The LexCycle($G$) is defined as the maximum length of a cycle of vertex orderings of $G$ obtained via such a sequence of LBFS$^{+}$ sweeps. Dusart and Habib conjectured in 2017 that LexCycle($G$)=2 if $G$ is a cocomparability graph and proved it holds for interval graphs. In this paper, we show that LexCycle($G$)=2 if $G$ is a $\overline{P_{2}\cup P_{3}}$-free cocomparability graph, where a $\overline{P_{2}\cup P_{3}}$ is the graph whose complement is the disjoint union of $P_{2}$ and $P_{3}$. As corollaries, it's applicable for diamond-free cocomparability graphs, cocomparability graphs with girth at least 4, as well as interval graphs.
\end{abstract}

\section{Introduction}
\label{sec:in}

Lexicographic Breadth First Search (LBFS) is a graph search paradigm which was developed by Rose, Tarjan and Lueker \cite{1st-LBFS76} for providing a simple linear time algorithm to recognize chordal graphs, namely, graphs containing no induced cycle of length greater than three. Since then, researchers have done plenty of studies on the properties and applications of LBFS \cite{LBFSeg1, LBFSeg2}. At each step of an LBFS procedure, a vertex is visited only if it has the lexicographically largest label. If there exists more than one such eligible vertex at some step, these vertices are said to be {\em tied} at this step.

A multi-sweep algorithm is an algorithm that produces a sequence of orderings $\{\sigma_{i}\}_{i\geq 0}$ where each ordering $\sigma_{i} (i \geq 1)$ breaks ties using specified tie-breaking rules by referring to the previous ordering $\sigma_{i-1}$. In particular, LBFS$^{+}$ is one of the most widely used variants of LBFS, which is a multi-sweep algorithm that chooses the rightmost tied vertex in the previous sweep, and therefore produces a unique vertex ordering. It was first investigated in \cite{LBFS+investigate1, LBFS+investigate2}, and has been used to recognize several well-known classes of graphs, such as unit interval graphs \cite{LBFSrecognize1}, interval graphs \cite{LBFSrecognize2, LBFS-recog-Lipeng} and cocomparability graphs \cite{2015}. Here we present a description of the generic LBFS procedure in Algorithm \ref{LBFS algorithm} which starts with a distinguished vertex and then allows arbitrary tie-breaking; following the LBFS procedure we impose the specific tie-breaking mechanism LBFS$^{+}$ in Algorithm \ref{LBFS+algorithm}.

\begin{algorithm}[h]
\caption{LBFS ($G, v$)}\label{LBFS algorithm}
\begin{algorithmic}[1]
    \Require
      a graph $G(V, E)$ and a distinguished vertex $v$ of $G$
    \Ensure
      an ordering $\sigma_{v}$ of vertices of $G$
\State label($v$) $\leftarrow$ $|V|$
\State assign the label $\epsilon$ to all the vertices of $V-\{v\}$
\For {$i\leftarrow$ 1 to $|V|$}
\State pick any unnumbered vertex $u$ with the lexicographically largest label \hfill{($\S$)}
\State $\sigma_{i} \gets u$
\For {each unnumbered vertex $w\in N(u)$}
\State append $(n-i)$ to label($w$)
\EndFor
\EndFor\\
\Return $\sigma_{v}$
\end{algorithmic}
\end{algorithm}

\smallskip
\begin{algorithm}
\caption{LBFS$^{+}(G, \pi)$}\label{LBFS+algorithm}
\begin{algorithmic}[1]
    \Require
      a graph $G(V, E)$ and an ordering $\pi$ of vertices of $G$
    \Ensure
      an ordering $\sigma$ of vertices of $G$

\noindent We run LBFS($G, \pi(|V|)$). In step ($\S$) of the LBFS procedure, let $L$ be the set of unnumbered vertices with the lexicographically largest label. Choose $u$ to be the vertex in $L$ that appears rightmost in $\pi$.
\end{algorithmic}
\end{algorithm}

The LexCycle of a graph $G$ is the size of the longest cycle resulting from a series of LBFS$^{+}$'s on $G$. Since a finite graph has a finite number of vertex orderings, this series will converge to a number of fixed orderings that produce a cycle, the largest size of which is captured by this LexCycle parameter. Charbit et al. first introduced this new graph parameter \cite{LexCycle}. They believed that a small LexCycle often leads to a linear structure that has been exploited algorithmically on a number of graph classes.

\smallskip
\begin{defn}\cite{LexCycle}
  Let $G$ be a graph, the {\em LexCycle$(G)$} is defined as the maximum length of a cycle of vertex orderings of $G$ obtained via a sequence of LBFS$^{+}$ sweeps starting with an arbitrary vertex ordering of $G$.
\end{defn}

Comparability graphs are the graphs that admit an acyclic transitive orientation of the edges; that is, there is an orientation of the edges such that for every three vertices $x, y, z$, if the edges $xy, yz$ are oriented $x\rightarrow y\rightarrow z$, then $xz\in E$ and $x\rightarrow z$. Cocomparability graphs are the complement graphs of comparability graphs and have been widely studied \cite{cocomp-domination, cocomp-independent, cocomp-2016, Mertzios12, LDFSeg2, LDFS-MPC13}. The well-studied interval graphs, co-bipartite graphs, permutation graphs and trapezoid graphs are subclasses of cocomparability graphs; and both comparability graphs and cocomparability graphs are well-known subclasses of perfect graphs \cite{Golumbic04}.

Charbit et al. \cite{LexCycle} reintroduced the conjecture that LexCycle($G$)=2 if $G$ is a cocomparability graph which was firstly raised in \cite{2015}. In particular, they showed that LexCycle($G$)=2 for some subclasses of cocomparability graphs (proper interval, interval, co-bipartite, domino-free cocomparability graphs) as well as trees. They mentioned that to prove the conjecture, a good way is to start by proving that it holds for $k$-ladder-free cocomparability graphs for any positive integer $k$. Further, they conjectured that LexCycle($G$)=2 even for AT-free graphs, which strictly contain cocomparability graphs. The $k$-ladder and asteroidal triple (AT) will be introduced in section 2.

In this paper, we show that LexCycle($G$)=2 for $\overline{P_{2}\cup P_{3}}$-free cocomparability graphs, i.e., Theorem \ref{LexCycle-result}. These graphs strictly contain interval graphs and are unrelated under inclusion to domino-free cocomparability graphs, where the LexCycle of any of these two graphs is proved to be 2 in \cite{LexCycle}. The rest of this paper is organized as follows. We present in section 2 some preliminary definitions, notations and known results. In section 3, we present the main results. In the final section we present concluding remarks.

\section{Preliminaries}
\label{sec:first}

In this paper, we consider simple finite undirected graphs $G=(V, E)$ on $n=|V|$ vertices. An {\em ordering} $\sigma$ of $G$ is a bijection $\sigma$ from $V$ to $\{1, 2, ..., n\}$. We write $u\prec_{\sigma} v$ if and only if $\sigma(u)< \sigma(v)$ and $u$ is said to be to the {\em left} of $v$ in $\sigma$ if $u\prec_{\sigma} v$. Given a sequence of orderings $\{\sigma_{i}\}_{i\geq 0}$, we write $u\prec_{i} v$ if $u\prec_{\sigma_{i}} v$; and $u\prec_{i, j} v$ if both $u\prec_{\sigma_{i}} v$ and $u\prec_{\sigma_{j}} v$. For $S\subseteq V$, the {\em induced subgraph} $G[S]$ of $G$ is the graph whose vertex set is $S$ and whose edge set consists of all the edges in $E$ with both end-vertices in $S$; we write $\sigma[S]$ to denote the ordering of $\sigma$ restricted to the vertices of $S$. $G$ is called {\em $H$-free} if $G$ does not contain $H$ as an induced subgraph. $P_{n}$ and $C_{n}$ denote a path and cycle respectively on $n$ vertices. A {\em domino} is a pair of $C_{4}$'s sharing an edge. The {\em girth} $g(G)$ of $G$ is the minimum length of a cycle in $G$ ($g(G)=\infty$ if $G$ does not contain a cycle). A {\em $k$-ladder} is a graph $G$ with $V(G)=\{a, a_{1}, a_{2}, ..., a_{k}$, $b, b_{1}, b_{2}, ..., b_{k}\}$ and edge set $E(G)= \{ab, aa_{1}, bb_{1}\} \cup \{a_{j}b_{j}| 1\leq j\leq k\} \cup \{a_{j}a_{j+1}, b_{j}b_{j+1}| 1\leq j\leq k-1\}$, as shown in Fig. \ref{Fig:k-ladder}.
\smallskip
\begin{figure}[ht]
          \centering
          \includegraphics[width=2.6in]{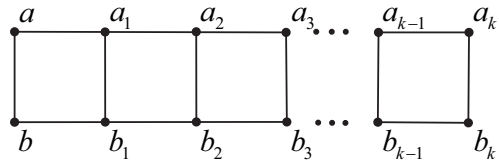}
          \caption{\small A $k$-ladder.}\label{Fig:k-ladder}
  \end{figure}

\smallskip
\begin{thm}{\rm\cite{1stLDFS08}} \label{cocomp ordering}
A graph $G=(V, E)$ is a cocomparability graph if and only if there exists a vertex ordering $\sigma$ such that if $x\prec_{\sigma} y\prec_{\sigma} z$ and $xz\in E$, then either $xy\in E$ or $yz\in E$ or both.
\end{thm}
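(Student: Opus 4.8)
The plan is to translate everything to the complement graph $\overline{G}$ and to observe that the stated ordering condition is precisely transitivity of the left-to-right edge orientation that $\sigma$ induces on $\overline{G}$. First I would rewrite the condition in $\overline{G}$. For $x\prec_{\sigma} y\prec_{\sigma} z$, the hypothesis $xz\in E(G)$ says $xz\notin E(\overline{G})$, while the conclusion ``$xy\in E(G)$ or $yz\in E(G)$'' says ``$xy\notin E(\overline{G})$ or $yz\notin E(\overline{G})$''. Taking the contrapositive, the requirement on $\sigma$ is equivalent to the following statement about $\overline{G}$: whenever $x\prec_{\sigma} y\prec_{\sigma} z$ with $xy,yz\in E(\overline{G})$, one also has $xz\in E(\overline{G})$. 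Since $G$ is a cocomparability graph exactly when $\overline{G}$ is a comparability graph, i.e.\ admits an acyclic transitive orientation, the whole theorem reduces to linking such orientations of $\overline{G}$ with orderings satisfying this reformulated condition.

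For the direction ($\Leftarrow$), I would take an ordering $\sigma$ with the reformulated property and orient every edge of $\overline{G}$ from left to right: set $u\to v$ whenever $uv\in E(\overline{G})$ and $u\prec_{\sigma} v$. This orientation is acyclic because it is consistent with the linear order $\sigma$. To check transitivity, suppose $x\to y$ and $y\to z$; then $x\prec_{\sigma} y\prec_{\sigma} z$ and $xy,yz\in E(\overline{G})$, so the reformulated condition gives $xz\in E(\overline{G})$, and $x\prec_{\sigma} z$ forces $x\to z$. Hence $\overline{G}$ has an acyclic transitive orientation, so $G$ is a cocomparability graph.

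For the direction ($\Rightarrow$), I would start from an acyclic transitive orientation $F$ of $\overline{G}$ and note that, by acyclicity and transitivity, the relation $u<_{F}v\iff u\to v$ is a strict partial order on $V$. Choosing any linear extension $\sigma$ of $<_{F}$, the key point is that every edge of $\overline{G}$ is then oriented left-to-right: if $xy\in E(\overline{G})$ with $x\prec_{\sigma} y$ but $y\to x$, then $y<_{F}x$ would give $y\prec_{\sigma}x$, a contradiction. Consequently, for $x\prec_{\sigma} y\prec_{\sigma} z$ with $xy,yz\in E(\overline{G})$ we get $x\to y$ and $y\to z$, whence $x\to z$ and $xz\in E(\overline{G})$; translating back to $G$ yields exactly the desired property of $\sigma$. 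The proof is short and its only delicate step is this last direction, where one must be sure that passing to a linear extension keeps all edges oriented consistently---this is where acyclicity (guaranteeing $<_{F}$ is a genuine partial order, hence linearly extendable) and the definition of a linear extension are both needed.
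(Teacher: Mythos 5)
Your proof is correct. Note that the paper does not prove this statement at all: Theorem~\ref{cocomp ordering} is quoted as a known result from the cited literature, so there is no in-paper argument to compare against. Your argument is the standard one for this classical characterization: rewriting the umbrella-free condition on $\sigma$ as transitivity of the left-to-right orientation it induces on $\overline{G}$ handles one direction, and taking a linear extension of the strict partial order given by an acyclic transitive orientation of $\overline{G}$ handles the other. Both directions are complete, and you correctly isolate the one delicate point, namely that a linear extension keeps every edge of $\overline{G}$ oriented consistently with $\sigma$, which is exactly what makes the translated condition hold.
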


Such an ordering in Theorem \ref{cocomp ordering} is called a {\em cocomparability ordering}, or an {\em umbrella-free ordering}. $G$ is an {\em interval graph} if its vertices can be put in one-to-one correspondence with intervals on the real line such that two vertices are adjacent in $G$ if and only if the corresponding intervals intersect.  An {\em asteroidal triple} (AT) is an independent triple of vertices $u, v, w$ such that every pair of the triple is connected when removing the closed neighbourhood of the third vertex from the graph.

\smallskip
There is a nice vertex ordering characterization of LBFS as shown in Lemma \ref{LBFS}, known as the {\em 4-Point Condition}, which plays a key role in the proof of the correctness of our result.

\begin{lem}{\rm\cite{1stLDFS08}} \label{LBFS}
  {\bf (4-Point Condition)} A vertex ordering $\sigma$ of a graph $G$ with vertex set $V$ is an LBFS ordering if and only if for any triple $x\prec_{\sigma} y\prec_{\sigma} z$, where $xz\in E$ and $xy\notin E$, there exists a vertex $w\prec_{\sigma} x$ such that $wy\in E$ and $wz\notin E$.
\end{lem}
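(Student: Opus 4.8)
My plan is to prove both implications directly from the label mechanics of the generic LBFS procedure (Algorithm~\ref{LBFS algorithm}), recalling that ``$\sigma$ is an LBFS ordering'' means there is a run of that procedure in which $\sigma(i)$ is chosen at step $i$, i.e.\ $\sigma(i)$ is among the unnumbered vertices of lexicographically largest label at every step. The whole argument rests on one comparison rule, which I would establish first: at any step, for two unnumbered vertices $p$ and $q$, $\mathrm{label}(p)>\mathrm{label}(q)$ lexicographically if and only if, scanning the already-numbered vertices in the order in which they were numbered, the first one adjacent to exactly one of $p,q$ is adjacent to $p$. This holds because the value $(n-i)$ appended at step $i$ is strictly decreasing in $i$, so each label is a strictly decreasing sequence that records exactly the numbered neighbours of the vertex, and the lexicographic comparison of two such sequences is decided at the earliest-numbered vertex on which the two neighbourhoods disagree.

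For the forward implication, let $\sigma$ be an LBFS ordering and take a triple $x\prec_\sigma y\prec_\sigma z$ with $xz\in E$ and $xy\notin E$. At the step at which $y$ is numbered, $z$ is still unnumbered, so legality of the choice of $y$ gives $\mathrm{label}(y)\ge\mathrm{label}(z)$. The vertex $x$ is numbered before $y$ and is adjacent to $z$ but not to $y$, so the two labels actually differ and $\mathrm{label}(y)>\mathrm{label}(z)$. By the comparison rule the first numbered vertex adjacent to exactly one of $y,z$ is adjacent to $y$ and not $z$; call it $w$. Since $w\sim y$ while $x\not\sim y$ we have $w\ne x$, and because $x$ is itself a vertex adjacent to exactly one of $y,z$, the ``first such'' vertex $w$ is numbered strictly before $x$, i.e.\ $w\prec_\sigma x$. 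This $w$ satisfies $wy\in E$ and $wz\notin E$, as required.

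For the converse, assume $\sigma$ satisfies the stated condition and argue by induction on $i$ that $\sigma(i)$ is a legal choice at step $i$, i.e.\ has a lexicographically largest label among the still-unnumbered vertices $\{\sigma(i),\dots,\sigma(n)\}$. If this failed, some $\sigma(j)$ with $j>i$ would have a strictly larger label, so by the comparison rule the first numbered vertex $x$ adjacent to exactly one of $\sigma(i),\sigma(j)$ would be adjacent to $\sigma(j)$ but not $\sigma(i)$. Setting $y=\sigma(i)$ and $z=\sigma(j)$ gives $x\prec_\sigma y\prec_\sigma z$, $xz\in E$, $xy\notin E$, so the hypothesis supplies $w\prec_\sigma x$ with $wy\in E$ and $wz\notin E$. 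But then $w$, which is numbered before $x$, is adjacent to exactly one of $y,z$, contradicting the choice of $x$ as the first such vertex. Hence every $\sigma(i)$ is a legal choice and $\sigma$ is an LBFS ordering.

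The only genuinely delicate point I expect is the comparison rule itself: one has to be careful that labels are compared as the sequences actually produced by the appends (strictly decreasing, hence ``read earliest-numbered neighbour first''), and in the forward direction to exclude the tie $\mathrm{label}(y)=\mathrm{label}(z)$, which I handle using the witness $x$ that already separates $y$ from $z$. Once the comparison rule is phrased correctly, both directions are short and essentially symmetric, each amounting to a single application of it together with the minimality of the ``first distinguishing vertex''.
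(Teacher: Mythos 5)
Your proof is correct, but there is nothing in the paper to compare it against: the 4-Point Condition is stated as a known result imported from the cited reference (Corneil--Krueger), and the paper gives no proof of it. Your argument is the standard self-contained one: first establish the label-comparison rule (since each appended value $n-i$ is strictly decreasing in $i$, a label is exactly the decreasing sequence recording the numbered neighbours, so lexicographic comparison of two labels is decided at the earliest-numbered vertex adjacent to exactly one of the two vertices), then derive both directions from the minimality of that first distinguishing vertex. Both directions check out: in the forward direction you correctly exclude the tie $\mathrm{label}(y)=\mathrm{label}(z)$ by using $x$ as a witness that the numbered neighbourhoods of $y$ and $z$ already differ, and you correctly get $w\neq x$ and hence $w\prec_{\sigma}x$ from strict minimality; in the converse, the vertex $w$ supplied by the hypothesis contradicts the choice of $x$ as the earliest distinguishing vertex, exactly as needed. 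One small point worth making explicit: the paper's Algorithm~\ref{LBFS algorithm} begins with a distinguished vertex $v$ that receives label $|V|$, so in the converse you should run the procedure with $v=\sigma(1)$; the step-$1$ choice is then forced, and your inductive argument covers all steps $i\geq 2$, where every label consists only of appended values and your comparison rule applies. This is a presentational detail, not a gap.
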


\begin{figure}[ht]
          \centering
          \includegraphics[width=3.5in]{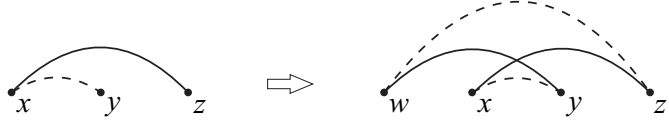}
          \caption{\small The 4-Point Condition.}\label{Fig:4-point-condition}
\end{figure}

Given a pair of vertices $y$ and $z$, we call a vertex $w$ where $wy\in E$ and $wz\notin E$ a {\em private neighbour} of $y$ with respect to $z$. A triple $(x, y, z)$ satisfying $x\prec_{\sigma} y\prec_{\sigma} z$ where $xz\in E$ and $xy\notin E$ is called a {\em bad triple} with respect to $\sigma$, where $\sigma$ is an ordering of $G$. In this paper, we always choose the vertex $w$ in Lemma \ref{LBFS} as the {\em leftmost} private neighbour of $y$ with respect to $z$ in $\sigma$, and write it as $w=$LMPN$(y|_{\sigma}z)$.

\smallskip
It follows directly from Theorem \ref{cocomp ordering} and Lemma \ref{LBFS} that an LBFS cocomparability ordering satisfies the following property.

\begin{thm}{\rm\cite{LexCycle}}\label{LBFS C4 Property}
  {\bf (LBFS $C_{4}$ Property)} Let $G=(V, E)$ be a cocomparability graph and $\sigma$ an LBFS cocomparability ordering of $G$. Then for every triple $x\prec_{\sigma} y\prec_{\sigma} z$ with $xz\in E$ and $xy\notin E$, there exists a vertex $w\prec_{\sigma} x$ such that $\{w, x, y, z\}$ induces a cycle where $wx, wy, yz\in E$.
\end{thm}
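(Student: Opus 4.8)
The plan is to chain together the two characterizations supplied just above the statement: the 4-Point Condition of Lemma~\ref{LBFS} furnishes the required vertex $w$ together with two edge/non-edge relations, and the umbrella-free characterization of Theorem~\ref{cocomp ordering} upgrades the remaining pairs to edges. First I would feed the given bad triple $x\prec_{\sigma} y\prec_{\sigma} z$ (with $xz\in E$, $xy\notin E$) directly into Lemma~\ref{LBFS}. This yields a vertex $w\prec_{\sigma} x$ with $wy\in E$ and $wz\notin E$; taking $w$ to be the leftmost such private neighbour (the convention $w=\mathrm{LMPN}(y|_{\sigma}z)$ introduced above) makes the choice canonical, though for this statement any such $w$ suffices. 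At this point I already have the ordering $w\prec_{\sigma} x\prec_{\sigma} y\prec_{\sigma} z$ and four of the six pairwise relations: $xz\in E$, $wy\in E$ on the edge side and $xy\notin E$, $wz\notin E$ on the non-edge side.

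The two relations still to be determined are $wx$ and $yz$, and both follow from a single application each of the umbrella-free property (Theorem~\ref{cocomp ordering}). For $yz$, I would examine the consecutive triple $x\prec_{\sigma} y\prec_{\sigma} z$: since $xz\in E$, the umbrella-free condition forces $xy\in E$ or $yz\in E$; as $xy\notin E$ by hypothesis, this pins down $yz\in E$. Symmetrically, for $wx$ I would examine the triple $w\prec_{\sigma} x\prec_{\sigma} y$: since $wy\in E$, the umbrella-free condition forces $wx\in E$ or $xy\in E$, and again $xy\notin E$ leaves $wx\in E$.

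Finally I would assemble the bookkeeping for the induced subgraph on $\{w,x,y,z\}$. The edges are exactly $wx,\,xz,\,yz,\,wy$ and the two remaining pairs $xy,\,wz$ are non-edges, so these four vertices induce the $4$-cycle $w\text{--}x\text{--}z\text{--}y\text{--}w$, in which $wx,wy,yz\in E$ as claimed (the two non-edges $xy,wz$ are precisely its diagonals). I do not expect a genuine obstacle here, since the result is essentially a direct corollary; the only point requiring care is the verification that the cycle is \emph{induced} rather than merely spanning, which is exactly where the non-edge $wz\notin E$ delivered by the 4-Point Condition (together with the hypothesis $xy\notin E$) is indispensable. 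Were one to invoke only Theorem~\ref{cocomp ordering} without Lemma~\ref{LBFS}, the existence of a leftward vertex $w$ and the crucial non-adjacency $wz\notin E$ would not be available, so the interplay of the two lemmas is the substantive content of the argument.
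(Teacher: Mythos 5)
Your proof is correct and follows exactly the route the paper intends: the paper introduces this statement with the remark that it ``follows directly from Theorem~\ref{cocomp ordering} and Lemma~\ref{LBFS}'', and your argument is precisely that chaining --- the 4-Point Condition supplies $w$ with $wy\in E$, $wz\notin E$, and two applications of the umbrella-free property (to $x\prec_{\sigma}y\prec_{\sigma}z$ and to $w\prec_{\sigma}x\prec_{\sigma}y$) yield $yz\in E$ and $wx\in E$, giving the induced $C_{4}$.
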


\begin{figure}[ht]
          \centering
          \includegraphics[width=3.5in]{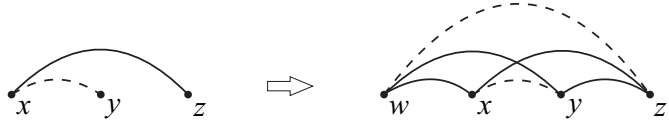}
          \caption{\small The LBFS $C_{4}$ Property.}\label{Fig:LBFS-C4}
\end{figure}

\begin{thm}{\rm\cite{LDFSeg2}} \label{LBFS cocomp}
  Let $G$ be a cocomparability graph, and $\pi$ a cocomparability ordering of $G$. Then the LBFS ordering $\sigma$=LBFS$^{+}(\pi)$ is also a cocomparability ordering of $G$.
\end{thm}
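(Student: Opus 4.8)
The plan is to argue by contradiction, transferring a forbidden configuration from the output ordering $\sigma$ back to the input ordering $\pi$ and invoking the umbrella-free characterization of cocomparability orderings (Theorem~\ref{cocomp ordering}). Concretely, I would assume that $\sigma=$LBFS$^{+}(\pi)$ is \emph{not} a cocomparability ordering; then, by Theorem~\ref{cocomp ordering}, there is an \emph{umbrella} in $\sigma$, i.e.\ a triple $x\prec_{\sigma}y\prec_{\sigma}z$ with $xz\in E$ and $xy,yz\notin E$. Among all umbrellas of $\sigma$ I would fix an extremal one, chosen to minimize $\sigma(z)$ first, then $\sigma(y)$, then $\sigma(x)$; the point of this lexicographic minimality is that any umbrella I subsequently produce with a strictly smaller right endpoint, middle, or left endpoint will immediately contradict the choice.

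The first tool is the tie-breaking mechanism of LBFS$^{+}$, which I would record as an exchange property. For any two vertices $u\prec_{\sigma}v$, consider the set of vertices lying to the left of $u$ in $\sigma$ that are adjacent to exactly one of $u,v$. If this set is nonempty, its $\sigma$-leftmost member $p$ satisfies $pu\in E$ and $pv\notin E$ (this is exactly why $u$ received the lexicographically larger label and was numbered first). If the set is empty, then $u$ and $v$ were tied when $u$ was selected, so the rule ``pick the vertex rightmost in $\pi$'' forces $v\prec_{\pi}u$; that is, a tie in $\sigma$ \emph{reverses} the order relative to $\pi$. The second tool is the 4-Point Condition (Lemma~\ref{LBFS}): applied to the bad triple $x\prec_{\sigma}y\prec_{\sigma}z$ it yields a private neighbour $w\prec_{\sigma}x$ with $wy\in E$ and $wz\notin E$, which I keep in reserve for the delicate case.

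The heart of the proof is to show that for the two pairs $(x,y)$ and $(y,z)$ the distinguishing set above is empty. For $(y,z)$, a distinguisher $p$ (so $py\in E$, $pz\notin E$) that is \emph{non}-adjacent to $x$ produces a genuine umbrella — namely $(x,p,z)$ when $x\prec_{\sigma}p$, or $(p,x,y)$ when $p\prec_{\sigma}x$ — with a strictly smaller middle or right endpoint, contradicting minimality. For $(x,y)$, a distinguisher $q$ (so $qx\in E$, $qy\notin E$, $q\prec_{\sigma}x$) with $qz\in E$ yields the umbrella $(q,y,z)$ with a strictly smaller left endpoint, again a contradiction. Once both distinguishing sets are empty, the exchange property gives $y\prec_{\pi}x$ and $z\prec_{\pi}y$, hence $z\prec_{\pi}y\prec_{\pi}x$; since $xz\in E$ and $xy,yz\notin E$, this is an umbrella in $\pi$, contradicting that $\pi$ is a cocomparability ordering.

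The step I expect to be the main obstacle is precisely ruling out the distinguishers, and within it the stubborn sub-case in which the distinguishing vertex is \emph{adjacent} to the umbrella's middle vertex (so that the four vertices induce a $P_{4}$ rather than a smaller umbrella outright), together with the symmetric residue for $(x,y)$ in which $qz\notin E$. In those situations no smaller umbrella appears immediately, and I would resolve them by bringing in the 4-Point witness $w$ and iterating the 4-Point Condition to perform a descent (valid since $G$ is finite), or by playing the cocomparability structure of the resulting $P_{4}$'s directly against $\pi$. Making this bookkeeping clean, while keeping the extremal choice strong enough to close every branch, is the crux of the argument.
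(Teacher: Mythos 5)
The paper never proves this statement itself --- it imports it from the reference cited alongside the Flipping Lemma --- so your proposal has to be judged against the standard argument. Judged that way, it has a genuine gap, in fact two related ones. First, the central mechanism is broken: your route to the umbrella in $\pi$ requires the distinguishing set of the pair $(y,z)$ to be empty, but that set is \emph{provably nonempty}, because $x$ itself lies to the left of $y$ in $\sigma$ and is adjacent to $z$ but not to $y$. Hence $y$ and $z$ were never tied, the ``$+$'' rule is never invoked for this pair, and the conclusion $z\prec_{\pi}y$ --- which your final step needs --- is unreachable no matter how the case analysis goes. All the exchange property can give you for $(y,z)$ is the 4-Point witness of Lemma~\ref{LBFS} (a vertex $w\prec_{\sigma}x$ with $wy\in E$, $wz\notin E$), which carries no information whatsoever about $\pi$.

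Second, the cases you defer are not bookkeeping; they are the entire content of the theorem, and they cannot be closed by the tools you allow yourself (minimality of the umbrella in $\sigma$, iterated 4-Point Condition). After your minimality argument, the surviving configuration for $(y,z)$ is a distinguisher $p\prec_{\sigma}x$ with $px,py\in E$, $pz\notin E$, i.e.\ an induced path $y$--$p$--$x$--$z$. A plain LBFS of a cocomparability graph can genuinely realize this: on the path $a$--$b$--$c$--$d$, the ordering $b,c,a,d$ is a valid LBFS ordering, $(c,a,d)$ is its unique (hence minimal) umbrella, and the leftmost distinguisher $b$ of the pair $(a,d)$ is adjacent to $c$, so the whole configuration is that $P_{4}$. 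Every within-$\sigma$ argument you propose applies verbatim to this ordering, yet there is no contradiction to be found, because the ordering exists; what rules it out as an output of LBFS$^{+}$ is only the interaction with a cocomparability $\pi$, which your framework can access solely through emptiness of distinguishing sets --- unavailable here. The standard fix is to change the extremal object: prove the Flipping Lemma first by taking, among non-edges $uv$ with $u\prec_{\pi}v$ that fail to flip (i.e.\ $u\prec_{\sigma}v$), one with $u$ leftmost in $\sigma$; a tie contradicts the ``$+$'' rule, and otherwise the leftmost distinguisher $w$ of $(u,v)$ satisfies $wu\in E$, $wv\notin E$, while minimality forces $v\prec_{\pi}w$, so $u\prec_{\pi}v\prec_{\pi}w$ is an umbrella in $\pi$, contradicting Theorem~\ref{cocomp ordering}. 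With the Flipping Lemma in hand, your intended final transfer works instantly: applying it to the two non-edges $xy$ and $yz$ of any alleged umbrella in $\sigma$ yields the umbrella $z\prec_{\pi}y\prec_{\pi}x$ in $\pi$. Indeed, inside this paper the theorem is exactly that two-line corollary of the Flipping Lemma.
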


Dusart and Habib presented a simple multi-sweep algorithm called {\em Repeated} LBFS$^{+}$, where the algorithm Repeated LBFS$^{+}$ starts with an arbitrary LBFS ordering $\sigma_{1}$ and produces $n=|V(G)|$ consecutive LBFS orderings $\sigma_{i}$($1\leq i\leq n$) such that $\sigma_{i}$=LBFS$^{+}(\sigma_{i-1})$ for $2\leq i\leq n$. The authors proved in \cite{2015} that $G$ is a cocomparability graph if and only if the Repeated LBFS$^{+}$ algorithm computes a cocomparability ordering. They further conjectured that this series always falls into a cycle of length 2. We state these results below.

\smallskip
\begin{lem}{\rm \cite{2015}}\label{Repeated LBFS+}
  $G$ is a cocomparability graph if and only if $\mathcal{O}(n)$ LBFS$^{+}$ sweeps compute a cocomparability ordering.
\end{lem}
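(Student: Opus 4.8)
The statement is an equivalence, and its reverse implication is essentially immediate. If some ordering produced by the sequence of $\mathrm{LBFS}^{+}$ sweeps is a cocomparability ordering, then $G$ admits an umbrella-free ordering, so Theorem~\ref{cocomp ordering} at once certifies that $G$ is a cocomparability graph. All the substance therefore lies in the forward implication, which I would treat as a convergence argument.

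For the forward direction the first step is to reduce ``$\mathcal O(n)$ sweeps suffice'' to ``one good sweep occurs early''. By Theorem~\ref{LBFS cocomp}, the family of cocomparability orderings of $G$ is closed under the map $\sigma\mapsto\mathrm{LBFS}^{+}(\sigma)$: once a sweep outputs an umbrella-free ordering, every subsequent sweep is umbrella-free as well. Hence it is enough to show that, starting from an arbitrary LBFS ordering $\sigma_1$, at least one of $\sigma_1,\sigma_2,\dots$ becomes umbrella-free after only $\mathcal O(n)$ steps; closure then guarantees that the Repeated $\mathrm{LBFS}^{+}$ output is a cocomparability ordering.

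To quantify progress I would attach to each LBFS ordering $\sigma$ a potential $\Phi(\sigma)$ that is bounded by $n$ and vanishes precisely on umbrella-free orderings. A natural candidate is $\Phi(\sigma)=(n+1)-\sigma(x_\sigma)$, where $x_\sigma$ is the leftmost vertex of $\sigma$ that is the left endpoint of some \emph{umbrella}, i.e.\ of a bad triple $x\prec_\sigma y\prec_\sigma z$ with $xz\in E$ that additionally satisfies $yz\notin E$ (and $\Phi(\sigma)=0$ if no umbrella exists). The heart of the proof is then the claim that a single $\mathrm{LBFS}^{+}$ sweep strictly decreases $\Phi$ whenever $\Phi(\sigma)>0$: the rightmost-in-$\sigma$ tie-breaking rule re-sorts the vertices so that the leftmost surviving defect of $\sigma$ is pushed strictly to the right, while the 4-Point Condition (Lemma~\ref{LBFS}) and the LBFS $C_4$ Property (Theorem~\ref{LBFS C4 Property}) are used to certify that no umbrella is created to the left of the previous one. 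Since $\Phi\le n$ strictly decreases at every non-terminal sweep, an umbrella-free ordering is reached within $\mathcal O(n)$ sweeps.

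The main obstacle is exactly this strict-decrease claim: one must show that $\mathrm{LBFS}^{+}$ makes monotone headway instead of merely relocating defects, and in particular that eliminating the earliest umbrella never manufactures a new one further to the left. Controlling this seems to require tracking how the private neighbours witnessing the 4-Point Condition are reused between consecutive sweeps, together with the transitivity of the acyclic orientation of $\overline G$ underlying the cocomparability structure; pinning down a monovariant whose monotonicity can actually be proved is the delicate part, and the candidate $\Phi$ above may well need refinement. A cleaner alternative would be an inductive endpoint-locking argument: show that after boundedly many sweeps the first vertex $\sigma(1)$ (which coincides with the last vertex of the previous sweep, since $\mathrm{LBFS}^{+}$ begins at $\pi(|V|)$) stabilizes as a legitimate initial vertex of a cocomparability ordering, after which the search effectively recurses on $G$ minus that vertex; $n$ such stabilizations give the $\mathcal O(n)$ bound. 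Either route reduces to the same underlying monotonicity phenomenon.
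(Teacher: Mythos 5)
This lemma is not proved in the paper at all: it is quoted verbatim from Dusart and Habib's work on cocomparability recognition, so there is no internal proof to compare against. Judged on its own terms, your proposal is not a proof but a strategy sketch with the decisive step missing, and you say so yourself: the entire forward implication rests on the claim that a single $\mathrm{LBFS}^{+}$ sweep strictly decreases your potential $\Phi$, and you concede that this is ``the delicate part'' and that $\Phi$ ``may well need refinement.'' That concession is exactly where the theorem lives. Everything you do establish rigorously (the reverse implication via Theorem~\ref{cocomp ordering}, and the closure of cocomparability orderings under $\mathrm{LBFS}^{+}$ via Theorem~\ref{LBFS cocomp}) is the easy part; the hard content of Dusart--Habib's result is precisely a progress measure whose monotonicity can be proved, and no such measure is exhibited here.

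There is also a concrete reason to doubt your candidate $\Phi$ as formulated. By the Flipping Lemma, consecutive sweeps reverse the relative order of \emph{every} non-edge, so the vertex orderings $\sigma_i$ and $\sigma_{i+1}$ are globally scrambled relative to one another; ``the leftmost vertex that is the left endpoint of an umbrella'' in $\sigma_i$ has no stable meaning in $\sigma_{i+1}$, and comparing positions of defects across sweeps is not even well posed without first fixing a reference ordering. Moreover, the Flipping Lemma is stated only for cocomparability orderings, whereas your intermediate sweeps $\sigma_1,\sigma_2,\dots$ are merely LBFS orderings of a cocomparability graph until convergence is established, so the tools you invoke to control defect migration are not available along the way. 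The known proof (and your own second, undeveloped alternative) instead runs an incremental locking argument in which each sweep certifiably fixes more of the structure --- roughly, suffixes of the orderings are shown to become and remain correct --- which requires a careful invariant maintained across sweeps, not a position-based monovariant. As it stands, the proposal identifies the right difficulty but does not overcome it.
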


\begin{conj}{\rm \cite{2015}}\label{conj}
  If $G$ is a cocomparability graph, then $LexCycle(G)=2$.
\end{conj}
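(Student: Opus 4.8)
The plan is to first reduce the problem to the eventual behaviour of the sweep sequence. By Lemma~\ref{Repeated LBFS+}, after $\mathcal{O}(n)$ sweeps every ordering produced is a cocomparability (umbrella-free) ordering, and by Theorem~\ref{LBFS cocomp} this property is preserved by every subsequent LBFS$^{+}$ sweep. Since $G$ has only finitely many vertex orderings, the sequence $\{\sigma_{i}\}$ must eventually enter a cycle $\sigma_{0}, \sigma_{1}, \dots, \sigma_{k-1}, \sigma_{k}=\sigma_{0}$ consisting entirely of LBFS cocomparability orderings with $\sigma_{i+1}=\mathrm{LBFS}^{+}(\sigma_{i})$. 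Writing $\sigma=\sigma_{i}$, $\sigma^{+}=\sigma_{i+1}$ and $\sigma^{++}=\sigma_{i+2}$, proving $\mathrm{LexCycle}(G)=2$ is equivalent to showing $\sigma^{++}=\sigma$ for every ordering $\sigma$ on this terminal cycle.

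The natural approach is then to prove $\sigma=\sigma^{++}$ by strong induction on vertex position. Assume $\sigma$ and $\sigma^{++}$ agree on the first $j$ vertices, and suppose for contradiction that $\sigma(j+1)=a\neq b=\sigma^{++}(j+1)$. Because the first $j$ visited vertices coincide, $a$ and $b$ carry identical labels at step $j+1$ of the two searches, so both are eligible, and the LBFS$^{+}$ tie-breaking rule forces $b$ to lie to the right of $a$ in $\sigma^{+}$. The idea is to combine the $C_{4}$ structure supplied by Theorem~\ref{LBFS C4 Property} with the private-neighbour mechanism of the 4-Point Condition (Lemma~\ref{LBFS}) to locate, within the already-placed prefix, a witness vertex that distinguishes $a$ and $b$ and contradicts either the eligibility of $b$ in $\sigma^{++}$ or its position relative to $a$ in $\sigma^{+}$. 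The linchpin of the whole argument is a \emph{flip property}: for vertices that can be simultaneously tied during a sweep, their relative order in $\sigma^{+}$ should be the reverse of their relative order in $\sigma$, since LBFS$^{+}$ always starts from the last vertex of the previous sweep and propagates this reversal through the tie-breaking. Establishing such a reversal for all relevant pairs would let the induction close, because the rightmost-in-$\sigma^{+}$ choice would then coincide with the leftmost-in-$\sigma$ choice that produced $a$.

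The main obstacle is precisely that this flip property need not hold for an arbitrary cocomparability graph. The 4-Point Condition and the LBFS $C_{4}$ Property constrain the local structure around a bad triple, but by themselves they do not pin down the relative order of two tied vertices across consecutive sweeps once long induced paths and their complements are present; configurations exist in which the order of a pair is not simply reversed, and the induction stalls. This is exactly why every known partial result imposes a forbidden-subgraph restriction ($k$-ladder-free, domino-free, and here $\overline{P_{2}\cup P_{3}}$-free): the excluded subgraph rules out the specific neighbourhood patterns that break the reversal control. A complete proof for all cocomparability graphs would therefore require either a structural decomposition reducing the general case to such restricted classes, or a genuinely new invariant that enforces the two-cycle directly from the transitive orientation of $\overline{G}$ guaranteed by Theorem~\ref{cocomp ordering}. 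Absent such a tool the conjecture remains open, and the present paper establishes only the $\overline{P_{2}\cup P_{3}}$-free case, where the forbidden subgraph makes the flip property tractable.
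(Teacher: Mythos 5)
You were asked to prove Conjecture~\ref{conj}, which is an open problem: the paper itself offers no proof of it, and only establishes the special case of $\overline{P_{2}\cup P_{3}}$-free cocomparability graphs (Theorem~\ref{result}, combined with Lemma~\ref{Repeated LBFS+} to yield Theorem~\ref{LexCycle-result}). Your proposal, to its credit, does not pretend otherwise. The reduction you sketch is sound and matches the paper's setup: finitely many orderings force the sweep sequence into a terminal cycle, Lemma~\ref{Repeated LBFS+} and Theorem~\ref{LBFS cocomp} guarantee all orderings on it are LBFS cocomparability orderings, and the attack via a leftmost divergence index with tied vertices (so that the ``$+$'' rule pins down their relative positions across sweeps) is exactly how the proof of Theorem~\ref{result} begins with $a_{1}, b_{1}$. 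But the inductive step is never carried out, so as a proof this is a non-starter; what you have written is an accurate assessment that the statement remains open, which is indeed the paper's own position.

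One correction to your diagnosis of the obstacle. The ``flip property'' you single out as the missing linchpin is, for non-adjacent pairs, not conjectural at all: it is the Flipping Lemma of \cite{LDFSeg2}, valid for \emph{every} cocomparability graph, and the paper invokes it repeatedly (to place $a_{i+1}$ and $b_{i+1}$ in the other sweeps). What actually fails in the general case is not reversal control but \emph{termination of the witness-chasing}: assuming $\sigma_{1}\neq\sigma_{3}$, the paper uses the Flipping Lemma, the LBFS $C_{4}$ Property (Theorem~\ref{LBFS C4 Property}), and leftmost-private-neighbour choices to grow an induced $(t-1)$-ladder for every $t\geq 2$, and $\overline{P_{2}\cup P_{3}}$-freeness is precisely what eliminates every alternative placement of $b_{i+1}$ in $\sigma_{1}$ (the two case analyses with the auxiliary vertices $g_{j}$), forcing the ladder to extend forever and contradicting finiteness. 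In a general cocomparability graph the construction can legitimately stop: ladders themselves are cocomparability graphs, which is exactly why Charbit et al.\ proposed $k$-ladder-free classes as the next target. So the missing ingredient is a replacement for the forbidden-subgraph contradiction that drives the infinite structure, not a repaired flip property; on the larger point — that no such ingredient is currently known and the conjecture is open — your conclusion agrees with the paper.
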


The conjecture is formulated based on the easy but very important tool called the {\em Flipping Lemma} about LBFS on cocomparability graphs.

\smallskip
\begin{lem}{\rm \cite{LDFSeg2}}
  {\bf (The Flipping Lemma)} Let $G=(V, E)$ be a cocomparability graph, $\sigma$ a cocomparability ordering of $G$ and $\tau=LBFS^{+}(\sigma)$. Then for every non-edge $uv\notin E$, $u\prec_{\sigma} v\Leftrightarrow v\prec_{\tau} u$.
\end{lem}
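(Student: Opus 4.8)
The plan is to collapse the biconditional to a single implication and then run a minimal-counterexample argument that sets the LBFS$^{+}$ tie-breaking rule against the umbrella-free property of $\sigma$. First I would note that it suffices to prove the forward implication for every non-edge: if $uv\notin E$ and $u\prec_{\sigma}v$, then $v\prec_{\tau}u$. Granting this for all non-edges, the reverse implication is automatic, since if $v\prec_{\tau}u$ but $u\not\prec_{\sigma}v$, then $v\prec_{\sigma}u$, and applying the forward implication to the ordered pair $(v,u)$ would force $u\prec_{\tau}v$, contradicting $v\prec_{\tau}u$.

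Next I would assume the forward implication fails and select a minimal counterexample. Call a non-edge $uv$ \emph{unflipped} if $u\prec_{\sigma}v$ and $u\prec_{\tau}v$; among all unflipped non-edges, choose one for which the $\tau$-position of its earlier (in $\tau$) endpoint is smallest. For the chosen pair this earlier endpoint is $u$ (as $u\prec_{\tau}v$), so we are minimising the step at which $u$ is numbered. At the moment $u$ is selected during the sweep computing $\tau$, the vertex $v$ is still unnumbered. Because $u\prec_{\sigma}v$, the tie-breaking rule of Algorithm \ref{LBFS+algorithm} (pick the tied vertex that is rightmost in $\sigma$) would have preferred $v$ to $u$ had their labels been equal; hence $u$ can be chosen only if its label is strictly lexicographically larger than that of $v$. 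This forces the existence of a first distinguishing vertex $w$, namely the earliest-numbered vertex adjacent to exactly one of $u,v$, and lexicographic dominance places it in $u$'s favour: $w\prec_{\tau}u$ with $wu\in E$ and $wv\notin E$.

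I would then turn the non-edge $wv$ against the minimality choice. Since $w\prec_{\tau}u\prec_{\tau}v$ we have $w\prec_{\tau}v$, and $w$ is numbered strictly before $u$. If $w\prec_{\sigma}v$ held, then $wv$ would itself be an unflipped non-edge whose earlier-in-$\tau$ endpoint $w$ is numbered before $u$, contradicting minimality. Therefore $v\prec_{\sigma}w$, and together with $u\prec_{\sigma}v$ this yields the $\sigma$-order $u\prec_{\sigma}v\prec_{\sigma}w$ with outer edge $uw\in E$. The umbrella-free characterisation of the cocomparability ordering $\sigma$ (Theorem \ref{cocomp ordering}) then demands $uv\in E$ or $vw\in E$, both of which are false. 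This contradiction removes the minimal counterexample and proves the forward implication, hence the lemma.

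I expect the only delicate point to be the label bookkeeping in the second paragraph: making precise that ``strictly larger label'' is equivalent to the existence of a first distinguishing vertex favouring $u$, and that this vertex is already numbered when $u$ is chosen. The rest — the symmetry reduction and the final umbrella clash — is short once that LBFS fact is pinned down. I would therefore state the first-distinguisher property explicitly, in the same spirit as the $4$-Point Condition of Lemma \ref{LBFS}, before invoking it, so that the concluding contradiction reads cleanly.
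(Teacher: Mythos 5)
The paper gives no proof of this lemma: it is imported verbatim from \cite{LDFSeg2}, so there is no internal argument to compare against, and your proposal must be judged as a self-contained proof. Judged that way, it is correct, and it is essentially the standard argument from the cited literature. The reduction of the biconditional to the forward implication is sound; the minimal-counterexample choice (minimising the $\tau$-position of the earlier-in-$\tau$ endpoint over all unflipped non-edges) is well-founded; and the first-distinguisher fact you isolate is exactly what strict lexicographic dominance gives: when $u$ is selected with $v$ still unnumbered, the labels of $u$ and $v$ agree up to the earliest-numbered vertex adjacent to exactly one of them, and since the ``$+$'' rule of Algorithm \ref{LBFS+algorithm} would have preferred $v$ (rightmost in $\sigma$) had the labels been equal, that vertex $w$ must satisfy $w\prec_{\tau}u$, $wu\in E$, $wv\notin E$. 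Your minimality then forces $v\prec_{\sigma}w$, and the triple $u\prec_{\sigma}v\prec_{\sigma}w$ with $uw\in E$ and $uv, vw\notin E$ is an umbrella, contradicting Theorem \ref{cocomp ordering}. One corner case deserves a sentence in the write-up: if $u$ is the first vertex of $\tau$, it is chosen by fiat (its label is set to $|V|$ in Algorithm \ref{LBFS algorithm}), not by label comparison, so no distinguishing vertex exists; but since LBFS$^{+}(\sigma)$ starts from $\sigma$'s rightmost vertex, such a $u$ admits no $v$ with $u\prec_{\sigma}v$ and the case is vacuous. With that remark, and with the first-distinguisher property stated explicitly as you propose (it is the same bookkeeping that underlies the 4-Point Condition of Lemma \ref{LBFS}), your proof is complete.
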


\section{Main results}
\label{sec:hints}
This paper presents a proof of a subcase of Conjecture \ref{conj}. In the following we will show that LexCycle(G)=2 where $G$ is a $\overline{P_{2}\cup P_{3}}$-free cocomparability graph. The graph $\overline{P_{2}\cup P_{3}}$ is shown in Fig. \ref{Fig:P2-P3}.

\smallskip
\begin{figure}[ht]
          \centering
          \includegraphics[width=1.2in]{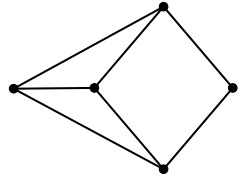}
          \caption{\small the graph $\overline{P_{2}\cup P_{3}}$.}\label{Fig:P2-P3}
  \end{figure}

\smallskip
\begin{thm} \label{result}
  Let $G$ be a $\overline{P_{2}\cup P_{3}}$-free cocomparability graph, $\pi$ an arbitrary cocomparability ordering of $G$, and
   $\{\sigma_{i}\}_{i\geq 0}$ a sequence of LBFS$^{+}$ orderings where $\sigma_{i+1}=LBFS^{+}(\sigma_{i})$ and $\sigma_{0}=LBFS^{+}(\pi)$. Then $\sigma_{1}=\sigma_{3}$.
\end{thm}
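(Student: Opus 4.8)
The plan is to follow the four cocomparability orderings $\sigma_0,\sigma_1,\sigma_2,\sigma_3$ simultaneously and to separate their behaviour on non-edges from their behaviour on edges. Since $\pi$ is a cocomparability ordering, Theorem \ref{LBFS cocomp} guarantees inductively that every $\sigma_i$ is an LBFS cocomparability ordering, so the Flipping Lemma applies to each consecutive pair. Applying it to $\sigma_0\to\sigma_1$, $\sigma_1\to\sigma_2$ and $\sigma_2\to\sigma_3$ shows that for every non-edge $uv\notin E$ one has $u\prec_1 v\iff v\prec_2 u\iff u\prec_3 v$; hence $\sigma_1$ and $\sigma_3$ impose the same order on every non-adjacent pair (and likewise $\sigma_0,\sigma_2$ agree on all non-edges). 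Consequently $\sigma_1=\sigma_3$ will follow as soon as I show that $\sigma_1$ and $\sigma_3$ also agree on every edge.

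Next I would run a \emph{first-difference} argument. Assume for contradiction that $\sigma_1\neq\sigma_3$, let $i$ be the first position where they differ, and set $a=\sigma_1(i)\neq\sigma_3(i)=b$ with common prefix $S=\{\sigma_1(1),\dots,\sigma_1(i-1)\}$. Because the prefixes coincide, $a$ and $b$ receive identical LBFS labels at step $i$, so they are tied, which forces $N(a)\cap S=N(b)\cap S$. The LBFS$^{+}$ tie-breaking rule then gives $b\prec_0 a$ (since $\sigma_1$ picks the vertex of the tie set rightmost in $\sigma_0$) and symmetrically $a\prec_2 b$, while by construction $a\prec_1 b$ and $b\prec_3 a$. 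If $ab\notin E$, the non-edge agreement from the previous paragraph would give $a\prec_1 b\iff a\prec_3 b$, contradicting $a\prec_1 b$ together with $b\prec_3 a$. Hence $ab\in E$: the first disagreement is necessarily an edge that is flipped between the cocomparability orderings $\sigma_0$ and $\sigma_2$.

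I would then eliminate the possibility that $a$ and $b$ are true twins. If $N[a]=N[b]$, then $a$ and $b$ are tied in every sweep and are always visited consecutively, so their relative order simply reverses at each step: $b\prec_0 a$ forces $a\prec_1 b$ and then $b\prec_2 a$, contradicting $a\prec_2 b$. Therefore some vertex $c$ is adjacent to exactly one of $a,b$; since $N(a)\cap S=N(b)\cap S$, this $c$ lies outside $S$ and is visited after position $i$ in both sweeps. This distinguishing vertex, together with the common neighbourhood $T=N(a)\cap S$ of $a$ and $b$, supplies the seed vertices for the structural contradiction.

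The heart of the argument — and the step I expect to be hardest — is to convert this configuration (an edge $ab$ that is tied yet oppositely ordered in $\sigma_0$ and $\sigma_2$, together with the distinguishing vertex $c$) into an induced $\overline{P_2\cup P_3}$, contradicting the hypothesis. I would feed the bad triples created by $c$ and by the flipped edge $ab$ into the LBFS $C_4$ Property (Theorem \ref{LBFS C4 Property}) and the $4$-Point Condition (Lemma \ref{LBFS}), repeatedly extracting leftmost private neighbours $w=\mathrm{LMPN}(\cdot|_{\sigma}\cdot)$, and then exploit the umbrella-free property of the cocomparability orderings to pin down the relative positions of the vertices produced. The target pattern is two non-adjacent vertices, each adjacent to an adjacent pair in the role of $y_1y_3$ together with a common neighbour $y_2$ that is non-adjacent to that pair — exactly the adjacency structure of $\overline{P_2\cup P_3}$. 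The delicate part is the case analysis on where these private and common neighbours fall in $\sigma_0$ and $\sigma_2$: $\overline{P_2\cup P_3}$-freeness must be invoked precisely to forbid the branching that would otherwise allow the flipped edge to persist, and verifying that the five chosen vertices induce exactly the forbidden graph (with no extra edges) is where the bulk of the technical work lies.
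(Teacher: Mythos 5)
Your opening moves are correct and coincide with the paper's: the first-difference argument, the forced equality $N(a)\cap S=N(b)\cap S$, the tie-break conclusions $b\prec_0 a$ and $a\prec_2 b$, and the deduction that the first disagreement must be an edge (the paper gets $ab\in E$ directly from the Flipping Lemma applied to $a\prec_{1,2}b$; your route through non-edge agreement of $\sigma_1$ and $\sigma_3$ is equivalent). But the proposal stops exactly where the real work begins, and the plan you sketch for that part points in a direction that does not succeed. From the configuration you have reached, one application of the tie-break/4-point machinery produces $a_2=\mathrm{LMPN}(a|_{\sigma_2}b)$, a bad triple $(a_2,b,a)$ in $\sigma_0$, and then $b_2=\mathrm{LMPN}(b|_{\sigma_0}a)$ with $a_2b_2\in E$; the result is an induced $C_4$ on $\{a,a_2,b_2,b\}$, which is a perfectly legal induced subgraph of a $\overline{P_2\cup P_3}$-free cocomparability graph. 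So there is no local contradiction to extract at this stage, and no bounded amount of the case analysis you describe will produce the forbidden graph from the seed configuration alone.

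What the paper actually does is iterate this step indefinitely: assuming $\sigma_1\neq\sigma_3$, it proves by induction on $t$ that $G$ contains an induced $(t-1)$-ladder $\{a_1,\dots,a_t,b_1,\dots,b_t\}$ with $a_{j+1}=\mathrm{LMPN}(a_j|_{\sigma_2}b_j)$, $b_{j+1}=\mathrm{LMPN}(b_j|_{\sigma_0}a_j)$, together with exact interleavings of all these vertices in $\sigma_0,\sigma_1,\sigma_2$. The hypothesis of $\overline{P_2\cup P_3}$-freeness is invoked only \emph{inside} the inductive step, in a case analysis on where $b_{i+1}$ can sit in $\sigma_1$; ruling out the bad cases requires walking auxiliary chains of private neighbours $g_1,g_2,\dots,g_{i-1}$ back down the entire ladder already built, obtaining at each rung either an induced $\overline{P_2\cup P_3}$ or a contradiction with an earlier LMPN choice or with $S\cap N(a_1)=S\cap N(b_1)$. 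The final contradiction is with the \emph{finiteness} of $G$ (the ladder cannot grow forever), not with the forbidden subgraph directly. This unbounded iterative construction is the key idea missing from your proposal. Two smaller points: your true-twins elimination is unnecessary, since the tie at position $k$ together with $a\prec_1 b$ and $a\prec_2 b$ already forces, via the ``$+$'' rule, a private neighbour of $a$ with respect to $b$ strictly to the left of $a$ in $\sigma_2$ (true twins would make this impossible, so the case never needs separating); and your distinguishing vertex $c$, about which you know only that it lies after position $k$, is too weak to seed any construction --- it is precisely that left-of-$a$ private neighbour, and its mirror image in $\sigma_0$, that drive the induction.
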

\begin{proof}
  We prove this theorem by contradiction. We will show an infinite structure of $G$, which is a contradiction to the finiteness of $G$.

  Since $\pi$ is a cocomparability ordering of $G$, it follows from Theorem \ref{LBFS cocomp} that each ordering $\sigma_{i}$ ($i\geq 0$) is an LBFS cocomparability ordering of $G$. Suppose to the contrary that $\sigma_{1}\neq \sigma_{3}$. Let $\sigma_{1}=u_{1}, u_{2}, ..., u_{n}$ and $\sigma_{3}=v_{1}, v_{2}, ..., v_{n}$. Denote $k$ the index of the leftmost vertex where $\sigma_{1}$ and $\sigma_{3}$ differ. Let $a_{1}$ (resp. $b_{1}$) denote the $k^{\rm{th}}$ vertex of $\sigma_{1}$ (resp. $\sigma_{3}$). Then $u_{i}=v_{i}$ for any $i< k$ and $u_{k}=a_{1}$, $v_{k}=b_{1}$. Thus $a_{1}\prec_{1} b_{1}$ and $b_{1}\prec_{3} a_{1}$. The following claim presents the infinite structure of $G$.

  \smallskip
{\it \textbf{Claim 1.}}
    {\em Assume that $a_{1}, b_{1}$ were given as defined previously. Then, for any integer $t\geq 2$, there always exists a $(t-1)$-ladder with vertex set $\{a_{1}, a_{2}, ..., a_{t}$, $b_{1}, b_{2}, ..., b_{t}\}$, satisfying that$:$

    $(1)$ $a_{j+1}$=LMPN$(a_{j}|_{\sigma_{2}} b_{j})$, $b_{j+1}=$ LMPN$(b_{j}|_{\sigma_{0}} a_{j}), \forall \,1\leq j \leq t-1$.

    $(2)$ $E(G[\{a_{1}, a_{2}, ..., a_{t}, b_{1}, b_{2}, ..., b_{t}\}])= \{a_{j}b_{j}| 1\leq j\leq t\} \cup \{a_{j}a_{j+1}, b_{j}b_{j+1}| 1\leq j\leq t-1\}$.

    $(3)$ $b_{t}\prec_{0} a_{t}\prec_{0} b_{t-1}\prec_{0} a_{t-1}\prec_{0} ... \prec_{0} b_{1}\prec_{0} a_{1};\,
         a_{1}\prec_{1} b_{1}\prec_{1} a_{2}\prec_{1} b_{2}\prec_{1} ... \prec_{1} a_{t}\prec_{1} b_{t};$\\
         $a_{t}\prec_{2} b_{t}\prec_{2} a_{t-1}\prec_{2} b_{t-1}\prec_{2} ... \prec_{2} a_{1}\prec_{2} b_{1}.$}

\smallskip
  We prove the claim by induction on $t$. We first show it holds for the base case $t=2$.

  Let $S=\{u_{1}, u_{2}, ..., u_{k-1}\}=\{v_{1}, v_{2}, ..., v_{k-1}\}$ ($S$ might be empty), then $\sigma_{1}[S]=\sigma_{3}[S]$. Since at the time $a_{1}$ was chosen in $\sigma_{1}$ after the ordering of $S$, $b_{1}$ was simultaneously chosen in $\sigma_{3}$, it follows that label($a$)=label($b$) at iteration $k$ in both $\sigma_{1}$ and $\sigma_{3}$, i.e., $S\cap N(a_{1})=S\cap N(b_{1})$. Therefore when $a_{1}$ was chosen in $\sigma_{1}$, the ``${+}$'' rule was applied to break ties between $a_{1}$ and $b_{1}$ and so $b_{1}\prec_{0} a_{1}$. Similarly, we have $a_{1}\prec_{2} b_{1}$.

  Since $a_{1}\prec_{1, 2} b_{1}$, we know that $a_{1}b_{1}\in E$ (by the Flipping Lemma) and there exists a vertex left of $a_{1}$ in $\sigma_{2}$ which is a private neighbour of $a_{1}$ with respect to $b_{1}$. We choose $a_{2}$ as $a_{2}$=LMPN$(a_{1}|_{\sigma_{2}} b_{1})$. Using the Flipping Lemma on the non-edge $a_{2}b_{1}$, we place $a_{2}$ in the remaining orderings and obtain that $a_{2}\prec_{0} b_{1}$, $b_{1}\prec_{1} a_{2}$. This gives rise to a bad triple in $\sigma_{0}$ where $a_{2}\prec_{0} b_{1}\prec_{0} a_{1}$ and $a_{2}a_{1}\in E, a_{2}b_{1}\notin E$.

  By the LBFS $C_{4}$ Property, we choose vertex $b_{2}$ as $b_{2}$=LMPN$(b_{1}|_{\sigma_{0}} a_{1})$ and thus $b_{2}a_{2}\in E$. We again use the Flipping Lemma on $b_{2}a_{1}\notin E$ to place $b_{2}$ in the remaining orderings, and obtain that $a_{1}\prec_{1} b_{2}$, $b_{2}\prec_{2} a_{1}$.

  Consider the position of $b_{2}$ in $\sigma_{2}$. If $b_{2}\prec_{2} a_{2}$, then ($b_{2}, a_{1}, b_{1}$) is a bad triple, contradicting to the choice of $a_{2}$ as $a_{2}$=LMPN$(a_{1}|_{\sigma_{2}} b_{1})$. Therefore $a_{2}\prec_{2} b_{2}\prec_{2} a_{1}$, as shown in Fig. \ref{Fig:initial-4-vertices}.

  \begin{figure}[ht]
          \centering
          \includegraphics[width=5.5in]{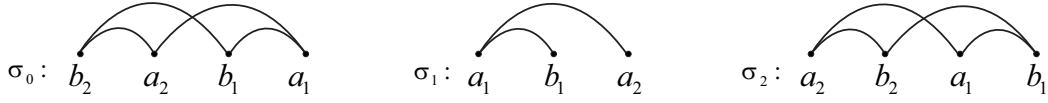}
          \caption{\small initial positions of $a_{1}, b_{1}$ in $\sigma_{0}$, $\sigma_{1}$ and $\sigma_{2}$, respectively.}\label{Fig:initial-4-vertices}
  \end{figure}

  Now we consider the position of $b_{2}$ in $\sigma_{1}$. We know that $a_{1}\prec_{1}b_{2}$. This gives rise to three cases: (\romannumeral1) $a_{1}\prec_{1}b_{2}\prec_{1}b_{1}$, or (\romannumeral2) $b_{1}\prec_{1}b_{2}\prec_{1}a_{2}$, or (\romannumeral3) $a_{2}\prec_{1}b_{2}$.

  (\romannumeral1) If $a_{1}\prec_{1}b_{2}\prec_{1}b_{1}$, then ($a_{1}, b_{2}, b_{1}$) is a bad triple in $\sigma_{1}$. Thus there exists a vertex $c\prec_{1} a_{1}$ (thus $c\in S$) such that $cb_{2}, ca_{1}\in E$ and $cb_{1}\notin E$, contradicting that $S\cap N(a_{1})=S\cap N(b_{1})$.

  (\romannumeral2) If $b_{1}\prec_{1}b_{2}\prec_{1}a_{2}$, then ($a_{1}, b_{2}, a_{2}$) is a bad triple in $\sigma_{1}$. Thus there exists a vertex $c\prec_{1} a_{1}$ (thus $c\in S$) such that $cb_{2}, ca_{1}\in E$ and $ca_{2}\notin E$. Since $S\cap N(a_{1})=S\cap N(b_{1})$, $cb_{1}\in E$. Then $\{c, a_{1}, b_{1}, b_{2}, a_{2}\}$ induces a $\overline{P_{2}\cup P_{3}}$, where $P_{2}$ is the path $a_{1}-b_{2}$ and $P_{3}$ is the path $c-a_{2}-b_{1}$, a contradiction.

  Therefore, $b_{2}$ must be placed as $a_{2}\prec_{1}b_{2}$, and thus we have completely determined the positions of vertices of $\{a_{1}, a_{2}, b_{1}, b_{2}\}$ in $\sigma_{0}, \sigma_{1}$ and $\sigma_{2}$, respectively, as shown in Fig. \ref{Fig:4-vertices-complete}. Therefore it holds for the base case.

  \begin{figure}[ht]
          \centering
          \includegraphics[width=5.5in]{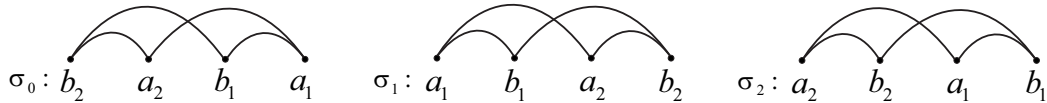}
          \caption{\small positions of $a_{2}, b_{2}$ in $\sigma_{0}$, $\sigma_{1}$ and $\sigma_{2}$, respectively.}\label{Fig:4-vertices-complete}
  \end{figure}
  From now on we suppose that it is true for $t=i$ and will prove the case when $t=i+1$. By the inductive hypothesis, there exists a sequence of vertices $a_{1}, a_{2}, ..., a_{i}, b_{1}, b_{2}, ..., b_{i}$ satisfying the three conditions in Claim 1.

          Since $a_{i}\prec_{1, 2} b_{i}$, there exists a vertex left of $a_{i}$ in $\sigma_{2}$ which is adjacent to $a_{i}$ but not to $b_{i}$. Choose $a_{i+1}$ as $a_{i+1}$=LMPN$(a_{i}|_{\sigma_{2}} b_{i})$. Using the Flipping Lemma on the non-edge $b_{i}a_{i+1}$, we have that $a_{i+1}\prec_{0} b_{i}$, $b_{i}\prec_{1} a_{i+1}$. This gives rise to a bad triple $(a_{i+1}, b_{i}, a_{i})$ in $\sigma_{0}$ where $a_{i+1}a_{i}\in E$ and $a_{i+1}b_{i}\notin E$.

          Choose $b_{i+1}$ as $b_{i+1}$=LMPN$(b_{i}|_{\sigma_{0}} a_{i})$. Using the Flipping Lemma on the non-edge $b_{i+1}a_{i}$, we obtain that $b_{i+1}\prec_{2} a_{i}$, $a_{i}\prec_{1} b_{i+1}$. If $b_{i+1}\prec_{2} a_{i+1}$, then ($b_{i+1}, a_{i}, b_{i}$) is a bad triple, contradicting that $a_{i+1}$=LMPN$(a_{i}|_{\sigma_{2}} b_{i})$. Thus, we have that $a_{i+1}\prec_{2} b_{i+1}\prec_{2} a_{i}$, as shown in Fig. \ref{Fig:initial a(i+1),b(i+1)}.

          \begin{figure}[ht]
          \centering
          \includegraphics[width=5.5in]{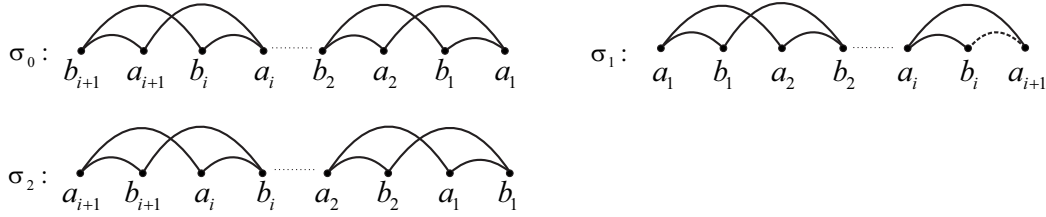}
          \caption{\small initial positions of $a_{i+1}, b_{i+1}$ in $\sigma_{0}$, $\sigma_{1}$ and $\sigma_{2}$, respectively.}\label{Fig:initial a(i+1),b(i+1)}
          \end{figure}

          Now we show that $b_{i+1}$ is adjacent to none of the vertices $a_{1}, a_{2}, ..., a_{i-1},$ $b_{1}, b_{2}, ..., b_{i-1}$, and similarly, we show that $a_{i+1}$ is adjacent to none of these vertices. We have shown that $b_{i+1}a_{i}\notin E$. Since also that $a_{i}b_{j}\notin E$ and $b_{i+1}\prec_{0} a_{i}\prec_{0} b_{j}$ for any $1\leq j\leq i-1$, it follows that $b_{i+1}b_{j}\notin E$ (by the definition a cocomparability ordering) for any $1\leq j\leq i-1$. Similarly, $a_{i+1}a_{j}\notin E$ for any $1\leq j\leq i-1$. It holds that $b_{i+1}a_{i-1}\notin E$, since otherwise, ($b_{i+1}, b_{i-1}, a_{i-1}$) is a bad triple in $\sigma_{0}$, contradicting that $b_{i}$=LMPN$(b_{i-1}|_{\sigma_{0}} a_{i-1})$. On the other hand, since $b_{i+1}\prec_{0} a_{i}\prec_{0} a_{j}$ and $b_{i+1}a_{i}, a_{i}a_{j}\notin E$ for any $1\leq j\leq i-2$, we have that $b_{i+1}a_{j}\notin E$ for any $1\leq j\leq i-2$. Therefore $b_{i+1}a_{j}\notin E$, for any $1\leq j\leq i-1$. Similarly, we deal with $a_{i+1}$ in $\sigma_{2}$ and obtain that $a_{i+1}b_{j}\notin E$, for any $1\leq j\leq i-1$. So far, we have proved the correctness of conditions (1) and (2).

          \smallskip
          What remains to be shown is the position of $b_{i+1}$ in the ordering $\sigma_{1}$. We know that $a_{i}\prec_{1}b_{i+1}$. This gives rise to three cases: (\romannumeral1) $a_{i}\prec_{1}b_{i+1}\prec_{1}b_{i}$, or (\romannumeral2) $b_{i}\prec_{1}b_{i+1}\prec_{1}a_{i+1}$, or (\romannumeral3) $a_{i+1}\prec_{1}b_{i+1}$. We will show that $b_{i+1}$ must be placed as in (\romannumeral3).

          \smallskip
          {\it \bf{Case 1:}} $a_{i}\prec_{1}b_{i+1}\prec_{1}b_{i}$.

          \begin{figure}[ht]
          \centering
          \includegraphics[width=6in]{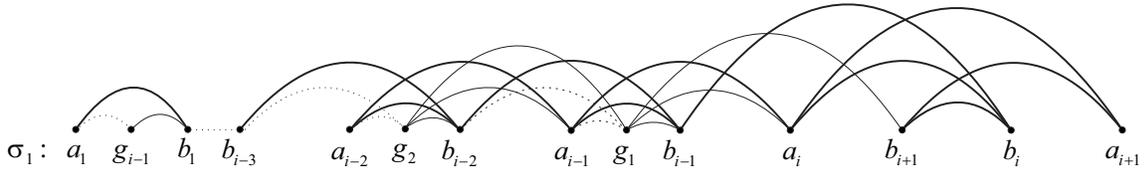}
          \caption{\small Case 1. $a_{i}\prec_{1}b_{i+1}\prec_{1}b_{i}$.}\label{Fig:b(i+1)1}
          \end{figure}

          In this case, ($b_{i-1}, b_{i+1}, b_{i}$) is a bad triple in $\sigma_{1}$, so we choose $g_{1}$ as $g_{1}$=LMPN$(b_{i+1}|_{\sigma_{1}} b_{i})$ and so $g_{1}b_{i-1}\in E$. The ladder structure implies that $g_{1}$ can't be any of the vertices $\{a_{j}, b_{j}\}_{1\leq j\leq i}$. Because of the fact that $g_{1}\prec_{1} a_{i}\prec_{1} b_{i+1}$ and $a_{i}b_{i+1}\notin E$, we have $g_{1}a_{i}\in E$. If $g_{1}\prec_{1} a_{i-1}$, then since $a_{i-1}b_{i+1}\notin E$, $g_{1}a_{i-1}\in E$. Thus, $\{g_{1}, a_{i}, a_{i-1}, b_{i-1}, b_{i}\}$ induces a $\overline{P_{2}\cup P_{3}}$, where $P_{2}$ is the path $a_{i}-b_{i-1}$ and $P_{3}$ is the path $g_{1}-b_{i}-a_{i-1}$, a contradiction. Therefore, $a_{i-1}\prec_{1} g_{1}\prec_{1} b_{i-1}$ and $g_{1}a_{i-1}\notin E$. Thus $g_{1}b_{i-2}\notin E$. The triple ($b_{i-2}, g_{1}, b_{i-1}$) is a bad triple in $\sigma_{1}$.

          We choose $g_{2}$ as $g_{2}$=LMPN$(g_{1}|_{\sigma_{1}} b_{i-1})$ and thus $g_{2}b_{i-2}\in E$. The non-edge $g_{1}a_{i-1}\notin E$ implies that $g_{2}a_{i-1}\in E$. It holds that $g_{2}a_{i-2}\notin E$, since otherwise $\{g_{2}, a_{i-1}, a_{i-2}, b_{i-2}, b_{i-1}\}$ induces a $\overline{P_{2}\cup P_{3}}$, where $P_{2}$ is the path $a_{i-1}-b_{i-2}$ and $P_{3}$ is the path $g_{2}-b_{i-1}-a_{i-2}$, a contradiction. We show (by contradiction) that $a_{i-2}\prec_{1} g_{2}\prec_{1} b_{i-2}$. If $g_{2}\prec_{1} a_{i-2}$, then $g_{2}a_{i-2}\notin E$ implies that $g_{1}a_{i-2}\in E$. Note that $a_{i-1}\prec_{1} g_{1}$ and $a_{i-1}g_{1}\notin E$, thus $g_{1}\prec_{2} a_{i-1}$. Observe that $g_{1}a_{i-2}\in E$ and $g_{1}b_{i-2}\notin E$, contradicting that $a_{i-1}$=LMPN$(a_{i-2}|_{\sigma_{2}} b_{i-2})$. If $g_{2}= a_{i-2}$, then we have $g_{1}a_{i-2}\in E$ and $g_{1}b_{i-2}\notin E$, which leads to the same contradiction. Therefore, $a_{i-2}\prec_{1} g_{2}\prec_{1} b_{i-2}$. Since $g_{2}a_{i-2}\notin E$, $g_{2}b_{i-3}\notin E$. The triple ($b_{i-3}, g_{2}, b_{i-2}$) is a bad triple in $\sigma_{1}$.

          We choose $g_{3}$ as $g_{3}$=LMPN$(g_{2}|_{\sigma_{1}} b_{i-2})$. We deal with $g_{3}$ in the same way as with $g_{2}$, and thus obtain a sequence of vertices $\{g_{j}\}_{2\leq j\leq i-1}$ such that $g_{j}$=LMPN$(g_{j-1}|_{\sigma_{1}} b_{i-j+1})$ and $a_{i-j}\prec_{1} g_{j}\prec_{1} b_{i-j}$, satisfying that $g_{j}b_{i-j-1}\notin E$ (if $j\leq i-2$), $g_{j}a_{i-j}\notin E$, $g_{j}b_{i-j}\in E$ and $g_{j}a_{i-j+1}\in E$, as shown in Fig. \ref{Fig:b(i+1)1}. Especially, $a_{1}\prec_{1} g_{i-1}\prec_{1} b_{1}$, and $g_{i-1}a_{1}\notin E$. Then, ($a_{1}, g_{i-1}, b_{1}$) is a bad triple in $\sigma_{1}$, resulting that there exists a vertex left of $a_{1}$ in $\sigma_{1}$ which is adjacent to $a_{1}$ and $g_{i-1}$ but not to $b_{1}$, contradicting that $S\cap N(a_{1})= S\cap N(b_{1})$.

          \smallskip
          {\it \bf{Case 2:}} $b_{i}\prec_{1}b_{i+1}\prec_{1}a_{i+1}$.

          \begin{figure}[ht]
          \centering
          \includegraphics[width=6in]{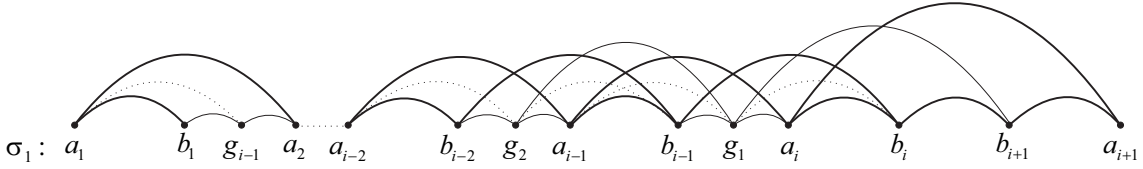}
          \caption{\small Case 2. $b_{i}\prec_{1}b_{i+1}\prec_{1}a_{i+1}$.}\label{Fig:b(i+1)2}
          \end{figure}

          In this case, ($a_{i}, b_{i+1}, a_{i+1}$) is a bad triple in $\sigma_{1}$, so we choose $g_{1}$ as $g_{1}$=LMPN$(b_{i+1}|_{\sigma_{1}} a_{i+1})$ and thus $g_{1}a_{i}\in E$. It follows from the ladder structure that $g_{1}$ can't be any of the vertices $\{a_{j}, b_{j}\}_{1\leq j\leq i}$. It holds that $g_{1}b_{i}\notin E$, since otherwise $\{g_{1}, a_{i}, b_{i}, b_{i+1}, a_{i+1}\}$ induces a $\overline{P_{2}\cup P_{3}}$, where $P_{2}$ is the path $a_{i}-b_{i+1}$ and $P_{3}$ is the path $g_{1}-a_{i+1}-b_{i}$, a contradiction.

          Consider the position of $g_{1}$ in $\sigma_{1}$. We know that $a_{i-1}b_{i+1}, b_{i-1}b_{i+1}\notin E$. If $g_{1}\prec_{1} a_{i-1}$, then $g_{1}a_{i-1}, g_{1}b_{i-1}\in E$. Thus, $\{g_{1}, a_{i}, a_{i-1}, b_{i-1}, b_{i}\}$ induces a $\overline{P_{2}\cup P_{3}}$, where $P_{2}$ is the path $a_{i}-b_{i-1}$ and $P_{3}$ is the path $g_{1}-b_{i}-a_{i-1}$, a contradiction. If $a_{i-1}\prec_{1} g_{1}\prec_{1} b_{i-1}$, then $g_{1}b_{i-1}\in E$ as $b_{i-1}b_{i+1}\notin E$. Thus $g_{1}a_{i-1}\notin E$ because of the same contradiction above, and thus $g_{1}b_{i-2}\notin E$, resulting that ($b_{i-2}, g_{1}, b_{i-1}$) is a bad triple in $\sigma_{1}$, which is the same as in Case 1. Therefore, we assume from now on that $b_{i-1}\prec_{1} g_{1}\prec_{1} a_{i}$. Since $g_{1}b_{i}\notin E$, $g_{1}b_{i-1}\in E$. Similarly, we have $g_{1}a_{i-1}\notin E$. The triple ($a_{i-1}, g_{1}, a_{i}$) is a bad triple in $\sigma_{1}$.

          We choose $g_{2}$ as $g_{2}$ =LMPN$(g_{1}|_{\sigma_{1}} a_{i})$ and thus $g_{2}a_{i-1}\in E$. Since $b_{i-2}a_{i-1}\notin E$, $g_{2}\neq b_{i-2}$. It holds that $g_{2}b_{i-1}\notin E$, since otherwise $\{g_{2}, a_{i-1}, b_{i-1}, g_{1}, a_{i}\}$ induces a $\overline{P_{2}\cup P_{3}}$, where $P_{2}$ is the path $a_{i-1}-g_{1}$ and $P_{3}$ is the path $g_{2}-a_{i}-b_{i-1}$, a contradiction.

          In the following we consider the position of $g_{2}$ in $\sigma_{1}$. If $g_{2}\prec_{1} a_{i-2}$, then $g_{2}a_{i-2}\in E$. Since otherwise, if $g_{2}a_{i-2}\notin E$, then $g_{1}a_{i-2}\in E$. Note that $g_{1}a_{i-1}\notin E$ and $b_{i-2}\prec_{1} a_{i-1}\prec_{1} g_{1}$ imply $g_{1}b_{i-2}\notin E$. Since $a_{i-1}\prec_{1} g_{1}$ and $a_{i-1}g_{1}\notin E$, we have $g_{1}\prec_{2} a_{i-1}$, contradicting to the choice of $a_{i-1}$ as $a_{i-1}$=LMPN$(a_{i-2}|_{\sigma_{2}} b_{i-2})$. Thus we have $g_{2}a_{i-2}\in E$. Since $g_{1}b_{i-2}\notin E$, $g_{2}b_{i-2}\in E$. Then $\{g_{2}, a_{i-1}, a_{i-2}, b_{i-2}, b_{i-1}\}$ induces a $\overline{P_{2}\cup P_{3}}$, where $P_{2}$ is the path $a_{i-1}-b_{i-2}$ and $P_{3}$ is the path $g_{2}-b_{i-1}-a_{i-2}$, a contradiction. If $g_{2}= a_{i-2}$, then immediately we have $g_{1}a_{i-2}\in E$ and $g_{1}b_{i-2}\notin E$, which still contradicts that $a_{i-1}$=LMPN$(a_{i-2}|_{\sigma_{2}} b_{i-2})$. If $a_{i-2}\prec_{1} g_{2}\prec_{1} b_{i-2}$, then $g_{2}a_{i-1}\in E$ (by the triple ($g_{2}, a_{i-1}, g_{1}$)) and $b_{i-2}a_{i-1}\notin E$ imply that $g_{2}b_{i-2}\in E$ (otherwise, ($g_{2}, b_{i-2}, a_{i-1}$) would be a bad triple in $\sigma_{1}$). It holds that $g_{2}a_{i-2}\notin E$, since otherwise $\{g_{2}, b_{i-2}, a_{i-2}, a_{i-1}, b_{i-1}\}$ induces a $\overline{P_{2}\cup P_{3}}$, where $P_{2}$ is the path $b_{i-2}-a_{i-1}$ and $P_{3}$ is the path $g_{2}-b_{i-1}-a_{i-2}$, a contradiction. If $i> 3$, $b_{i-3}a_{i-2}\notin E$, $g_{2}b_{i-3}\notin E$ implies ($b_{i-3}, g_{2}, b_{i-2}$) is a bad triple in $\sigma_{1}$, which is the same as in Case 1. If $i=3$, then ($a_{1}=a_{i-2}, g_{2}, b_{i-2}=b_{1}$) is a bad triple in $\sigma_{1}$, contradicting that $S\cap N(a_{1})=S\cap N(b_{1})$ by using the LBFS $C_{4}$ Property. Therefore we assume from now on that $b_{i-2}\prec_{1} g_{2}\prec_{1} a_{i-1}$.

          Since $g_{2}b_{i-1}\notin E$, it follows that $g_{2}b_{i-2}\in E$. If $g_{2}a_{i-2}\in E$, then $\{g_{2}, a_{i-1}, a_{i-2}, b_{i-2}, b_{i-1}\}$ induces a $\overline{P_{2}\cup P_{3}}$, where $P_{2}$ is the path $a_{i-1}-b_{i-2}$ and $P_{3}$ is the path $g_{2}-b_{i-1}-a_{i-2}$, a contradiction. Thus $g_{2}a_{i-2}\notin E$. Then ($a_{i-2}, g_{2}, a_{i-1}$) is a bad triple in $\sigma_{1}$. Choose $g_{3}$ as $g_{3}$=LMPN $(g_{2}|_{\sigma_{1}} a_{i-1})$. We deal with $g_{3}$ in the same way as with $g_{2}$, and thus obtain a sequence of vertices $\{g_{j}\}_{2\leq j\leq i-1}$, such that $g_{j}$=LMPN$(g_{j-1}|_{\sigma_{1}} a_{i-j+2})$ and $b_{i-j}\prec_{1} g_{j}\prec_{1} a_{i-j+1}$, satisfying $g_{j}a_{i-j+1}\in E, g_{j}b_{i-j+1}\notin E, g_{j}b_{i-j}\in E$ and $g_{j}a_{i-j}\notin E$, as shown in Fig. \ref{Fig:b(i+1)2}. Especially, $b_{1}\prec_{1} g_{i-1}\prec_{1} a_{2}$, $g_{i-1}b_{1}\in E$ and ($a_{1}, g_{i-1}, a_{2}$) is a bad triple in $\sigma_{1}$. Thus there exists a vertex $c\prec_{1} a_{1}$ (thus $c\in S$) such that $ca_{1}, cg_{i-1}\in E$ and $ca_{2}\notin E$. Since $S\cap N(a_{1})=S\cap N(b_{1})$, it follows that $cb_{1}\in E$. Then $\{c, g_{i-1}, b_{1}, a_{1}, a_{2}\}$ induces a $\overline{P_{2}\cup P_{3}}$, where $P_{2}$ is the path $g_{i-1}-a_{1}$ and $P_{3}$ is the path $c-a_{2}-b_{1}$, a contradiction.

          Thus we obtain that $b_{i+1}$ must be placed in $\sigma_{1}$ as $a_{i+1}\prec_{1} b_{i+1}$, as required in condition (3). Therefore, we have completely proved the correctness of Claim 1.

\smallskip
  Since we can always find such a sequence of vertices $a_{1}, a_{2}, ..., a_{t}$, $b_{1}, b_{2}, ..., b_{t}$ for any integer $t\geq 2$, we get a contradiction to $G$ being finite. Thus $\sigma_{1}=\sigma_{3}$, as required.
\end{proof}

Combining Lemma \ref{Repeated LBFS+} with Theorem \ref{result}, we immediately obtain our main result as following.

\begin{thm}\label{LexCycle-result}
  Let $G$ be a $\overline{P_{2}\cup P_{3}}$-free cocomparability graph. Then $LexCycle(G)=2$.
\end{thm}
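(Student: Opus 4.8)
The plan is to obtain Theorem~\ref{LexCycle-result} as a short corollary of Theorem~\ref{result} together with Lemma~\ref{Repeated LBFS+}, since all the combinatorial work (the ladder construction) has already been discharged. First I would unwind the definition of $LexCycle(G)$: starting from an \emph{arbitrary} vertex ordering $\rho_{0}$, I form the sweep sequence $\rho_{i+1}=LBFS^{+}(\rho_{i})$, and recall that, because $G$ is finite, this sequence is eventually periodic; $LexCycle(G)$ is the length of the cycle it falls into, maximized over all choices of $\rho_{0}$.

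The upper bound $LexCycle(G)\le 2$ is the heart of the matter and is reached by an index-bookkeeping argument. By Lemma~\ref{Repeated LBFS+}, after $\mathcal{O}(n)$ sweeps the sequence reaches a cocomparability ordering; say $\rho_{m}$ is one. By Theorem~\ref{LBFS cocomp} every later ordering $\rho_{m+1},\rho_{m+2},\dots$ is again a cocomparability ordering. I would then apply Theorem~\ref{result} with $\pi=\rho_{m}$, so that $\sigma_{0}=LBFS^{+}(\pi)=\rho_{m+1}$, $\sigma_{1}=\rho_{m+2}$ and $\sigma_{3}=\rho_{m+4}$; its conclusion $\sigma_{1}=\sigma_{3}$ reads $\rho_{m+2}=\rho_{m+4}$. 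Hence $LBFS^{+}(LBFS^{+}(\rho_{m+2}))=\rho_{m+2}$, so from $\rho_{m+2}$ onward the sequence is periodic with period dividing $2$. Since this holds whatever $\rho_{0}$ we started from, every resulting cycle has length at most $2$, that is, $LexCycle(G)\le 2$.

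For the matching lower bound I would show that, away from a trivial case, the period is genuinely $2$ rather than $1$. If $G$ has at least one non-edge, then applying the Flipping Lemma to the cocomparability ordering $\sigma_{1}=\rho_{m+2}$ reverses the order of that non-edge under one $LBFS^{+}$ sweep, so $\sigma_{2}=LBFS^{+}(\sigma_{1})\neq\sigma_{1}$; combined with $\sigma_{1}=\sigma_{3}$ this exhibits a cycle $\{\sigma_{1},\sigma_{2}\}$ of length exactly $2$. The only remaining case is a complete graph, where a direct check shows that one $LBFS^{+}$ sweep reverses any ordering, again producing a $2$-cycle when $n\ge 2$. Together with the upper bound, this gives $LexCycle(G)=2$.

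I do not anticipate a serious obstacle here, precisely because the difficult induction already lives inside Theorem~\ref{result}. The only points demanding care are the bookkeeping that converts the single equation $\sigma_{1}=\sigma_{3}$ into the statement ``the sweep sequence is eventually $2$-periodic from an arbitrary start,'' and the separate verification of the lower bound, namely that the cycle does not collapse to a fixed point; the latter rests entirely on the Flipping Lemma and the easy degenerate complete-graph check.
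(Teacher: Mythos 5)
Your proposal is correct and takes essentially the same route as the paper, whose entire proof is the one-line observation that Lemma~\ref{Repeated LBFS+} (reaching a cocomparability ordering after finitely many sweeps) combined with Theorem~\ref{result} (which forces $\sigma_{1}=\sigma_{3}$ from that point on) yields the result. Your extra index bookkeeping and the explicit lower-bound verification via the Flipping Lemma and the complete-graph case merely spell out details the paper leaves implicit.
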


Note that $\overline{P_{2}\cup P_{3}}$-free cocomparability graphs strictly contain both $C_{4}$-free cocomparability graphs (i.e., interval graphs, which have been proved in \cite{LexCycle}) and diamond-free cocomparability graphs, where a diamond consists of a complete graph $K_{4}$ minus one edge, we thus immediately obtain the following corollaries.

\begin{cor}{\rm \cite{LexCycle}}
  Let $G$ be an interval graph. Then $LexCycle(G)=2$.
\end{cor}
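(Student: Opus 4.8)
The plan is to obtain this corollary as an immediate consequence of Theorem~\ref{LexCycle-result}, so that the whole argument reduces to a single class-inclusion: every interval graph is a $\overline{P_{2}\cup P_{3}}$-free cocomparability graph. No new work with LBFS$^{+}$ sweeps is required; once the inclusion is established, Theorem~\ref{LexCycle-result} delivers $LexCycle(G)=2$ directly.

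First I would recall two classical structural facts about interval graphs. Every interval graph is a cocomparability graph: ordering the vertices by the left endpoints of a fixed interval representation produces an umbrella-free ordering in the sense of Theorem~\ref{cocomp ordering}. Moreover, every interval graph is chordal and hence has no induced cycle on four or more vertices; in particular it is $C_{4}$-free. Thus interval graphs are exactly the $C_{4}$-free cocomparability graphs, which is precisely the description used in the paragraph preceding the corollary.

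Next I would carry out the one genuinely computational step: checking that $\overline{P_{2}\cup P_{3}}$ contains a chordless $C_{4}$. Writing the $P_{2}$ on $\{x,y\}$ and the $P_{3}$ as $u-v-w$, the disjoint union has edge set $\{xy,uv,vw\}$, so in the complement $\overline{P_{2}\cup P_{3}}$ all remaining pairs are edges. One then verifies that the four vertices $x,v,y,w$ induce the cycle $x-v-y-w-x$ whose two diagonal (opposite) pairs $\{x,y\}$ and $\{v,w\}$ are exactly the non-edges $xy$ and $vw$; hence these four vertices induce a $C_{4}$. Consequently any graph containing an induced $\overline{P_{2}\cup P_{3}}$ also contains an induced $C_{4}$, so being $C_{4}$-free forces being $\overline{P_{2}\cup P_{3}}$-free.

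Combining the two steps, an interval graph is a $C_{4}$-free cocomparability graph, hence a $\overline{P_{2}\cup P_{3}}$-free cocomparability graph, and Theorem~\ref{LexCycle-result} yields $LexCycle(G)=2$. There is no substantial obstacle here: the only points requiring care are the complement computation confirming the induced $C_{4}$ inside $\overline{P_{2}\cup P_{3}}$, and invoking the standard equivalence (through chordality together with the fact that cocomparability graphs are AT-free, and therefore free of long induced cycles) that interval graphs coincide with the $C_{4}$-free cocomparability graphs.
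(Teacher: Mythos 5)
Your proposal is correct and follows exactly the paper's route: the paper also derives this corollary immediately from Theorem~\ref{LexCycle-result} via the observation that interval graphs are precisely the $C_{4}$-free cocomparability graphs, which are contained in the $\overline{P_{2}\cup P_{3}}$-free cocomparability graphs. Your verification that $\overline{P_{2}\cup P_{3}}$ contains an induced $C_{4}$ (on $x,v,y,w$) just makes explicit the inclusion that the paper states without proof, and your argument correctly needs only the easy direction (interval $\Rightarrow$ $C_{4}$-free cocomparability), not the full Gilmore--Hoffman equivalence.
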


\begin{cor}\label{diamond}
  Let $G$ be a diamond-free cocomparability graph. Then $LexCycle(G)=2$.
\end{cor}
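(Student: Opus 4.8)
The plan is to derive Corollary~\ref{diamond} directly from the main result, Theorem~\ref{LexCycle-result}, via a hereditary-class containment: I will show that every diamond-free graph is $\overline{P_{2}\cup P_{3}}$-free. Both properties are closed under taking induced subgraphs, so the containment reduces to a single finite observation, namely that the diamond occurs as an \emph{induced} subgraph of $\overline{P_{2}\cup P_{3}}$. Granting this, if a graph $G$ contained $\overline{P_{2}\cup P_{3}}$ as an induced subgraph it would also contain an induced diamond; contrapositively, diamond-freeness forces $\overline{P_{2}\cup P_{3}}$-freeness.

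To verify the finite observation, I would label $P_{2}\cup P_{3}$ with the $P_{2}$ on $\{1,2\}$ (edge $12$) and the $P_{3}$ on $\{3,4,5\}$ with edges $34,45$, so the edge set of $P_{2}\cup P_{3}$ is $\{12,34,45\}$. The complement $\overline{P_{2}\cup P_{3}}$ then has edge set $\{13,14,15,23,24,25,35\}$, consisting of all remaining pairs. Restricting to the four vertices $\{1,2,3,5\}$, the induced edges are exactly $\{13,15,23,25,35\}$, with the single pair $12$ missing; this is $K_{4}$ minus one edge, i.e.\ a diamond, and it is induced precisely because $12$ is the only absent pair among these four vertices. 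Hence the diamond is an induced subgraph of $\overline{P_{2}\cup P_{3}}$, as claimed.

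Combining these facts completes the argument: a diamond-free cocomparability graph $G$ is a $\overline{P_{2}\cup P_{3}}$-free cocomparability graph, so $LexCycle(G)=2$ by Theorem~\ref{LexCycle-result}. I expect no genuine obstacle here; the only point needing care is the direction of the hereditary implication---it is the diamond that must embed into $\overline{P_{2}\cup P_{3}}$ rather than the reverse---together with the check that this embedding is induced and not merely a spanning subgraph. For completeness one may note that the containment is strict, witnessed for instance by the diamond itself, which is a cocomparability graph on four vertices and is vacuously $\overline{P_{2}\cup P_{3}}$-free (having too few vertices to contain the five-vertex graph $\overline{P_{2}\cup P_{3}}$) while obviously containing an induced diamond.
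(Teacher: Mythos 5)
Your proof is correct and follows essentially the same route as the paper: the paper likewise derives Corollary~\ref{diamond} by noting that diamond-free cocomparability graphs are contained in $\overline{P_{2}\cup P_{3}}$-free cocomparability graphs and invoking Theorem~\ref{LexCycle-result}. The only difference is that the paper asserts this containment without proof, whereas you verify explicitly (and correctly) that the diamond occurs as an induced subgraph of $\overline{P_{2}\cup P_{3}}$ on the vertices $\{1,2,3,5\}$ in your labelling.
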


Additionally, $\overline{P_{2}\cup P_{3}}$-free cocomparability graphs strictly contain triangle-free cocomparability graphs, we thus immediately obtain that this result also holds for cocomparability graphs with girth at least 4.

\begin{cor}\label{girth}
  Let $G$ be a cocomparability graph with girth $g(G)\geq 4$. Then $LexCycle(G)=2$.
\end{cor}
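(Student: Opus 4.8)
The plan is to reduce Corollary~\ref{girth} to the already established Theorem~\ref{LexCycle-result} by a forbidden-subgraph containment argument; the entire content is the single structural observation that $\overline{P_{2}\cup P_{3}}$ is not triangle-free, so that the hypothesis $g(G)\geq 4$ already forbids it as an induced subgraph.

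First I would exhibit a triangle inside $\overline{P_{2}\cup P_{3}}$ explicitly. Label the complement so that the $P_{2}$ has vertex set $\{x_{1}, x_{2}\}$ and the $P_{3}$ is the path $y_{1}-y_{2}-y_{3}$. In the disjoint union $P_{2}\cup P_{3}$, the three pairs $x_{1}y_{1}$, $x_{1}y_{3}$ and $y_{1}y_{3}$ are all non-edges ($x_{1}$ lies in a different component from the $y$'s, and $y_{1}, y_{3}$ are the two endpoints of the $P_{3}$), so in $\overline{P_{2}\cup P_{3}}$ these three vertices are pairwise adjacent and induce a $C_{3}$. Hence $\overline{P_{2}\cup P_{3}}$ contains an (induced) triangle.

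Then the conclusion is immediate. By the definition of girth, $g(G)\geq 4$ means $G$ has no cycle of length $3$, i.e. $G$ is triangle-free; consequently $G$ cannot contain, as an induced subgraph, any graph that itself has a triangle. Since $\overline{P_{2}\cup P_{3}}$ has a triangle by the previous step, $G$ is $\overline{P_{2}\cup P_{3}}$-free. As $G$ is a cocomparability graph by hypothesis, $G$ is a $\overline{P_{2}\cup P_{3}}$-free cocomparability graph, and Theorem~\ref{LexCycle-result} applies to give $LexCycle(G)=2$.

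There is no genuine obstacle here; the only point requiring care is the direction of the class inclusion. Being triangle-free forbids every induced supergraph of a triangle, so the triangle-free cocomparability graphs form a \emph{subclass} of the $\overline{P_{2}\cup P_{3}}$-free cocomparability graphs — not the reverse — and it is precisely this inclusion that lets Theorem~\ref{LexCycle-result} take over. The same template underlies Corollary~\ref{diamond}: there one instead verifies that $\overline{P_{2}\cup P_{3}}$ contains an induced diamond (for instance $\{x_{1}, x_{2}, y_{1}, y_{3}\}$ induces a $K_{4}$ minus the edge $x_{1}x_{2}$), so that diamond-freeness likewise forces $\overline{P_{2}\cup P_{3}}$-freeness.
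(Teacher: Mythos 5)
Your proof is correct and follows exactly the paper's route: the paper likewise observes that triangle-free cocomparability graphs (hence those with girth at least $4$) form a subclass of $\overline{P_{2}\cup P_{3}}$-free cocomparability graphs and invokes Theorem~\ref{LexCycle-result}. Your explicit exhibition of the triangle $\{x_{1}, y_{1}, y_{3}\}$ just spells out the containment the paper states without detail.
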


\section{Concluding remarks}
\label{sec:pdf}
In this paper we focus on the parameter called LexCycle($G$), recently introduced by Charbit et al. \cite{LexCycle}, and show that LexCycle($G$)=2 if $G$ is a $\overline{P_{2}\cup P_{3}}$-free cocomparability graph. As corollaries, it's applicable for diamond-free cocomparability graphs, cocomparability graphs with girth at least 4, as well as interval graphs. In the proof of Theorem \ref{result}, we have assumed that $b_{i+1}$=LMPN$(b_{i}|_{\sigma_{0}} a_{i})$. In fact, using this requirement, we can get the strict ordering of $a_{1}, a_{2}, ..., a_{i}, b_{1}, b_{2}, ..., b_{i}$ in $\sigma_{3}$ as $b_{1}\prec_{3} a_{1}\prec_{3} b_{2}\prec_{3} a_{2}\prec_{3} ... \prec_{3} b_{i}\prec_{3} a_{i}$.\\[3ex]
{\bf Acknowledgements} The authors would like to thank anonymous reviewers for their helpful comments and suggestions which lead to a considerably improved presentation.

\nocite{*}
\bibliographystyle{abbrvnat}
\bibliography{sample-dmtcs}
\label{sec:biblio}

\end{document}